\newenvironment{dedication}
        {\begin{quotation}\begin{center}\begin{em}}
        {\par\end{em}\end{center}\end{quotation}}
\tikzset{
>=stealth',
  punktchain/.style={
    rectangle,
    rounded corners,
    draw=black, thick,
^{}    
    minimum height=3em,
    text centered,
    on chain},
  line/.style={draw, thick, <-},
  element/.style={
    tape,
    top color=white,
    bottom color=blue!50!black!60!,
    minimum width=8em,
    draw=blue!40!black!90, very thick,
    text width=10em,
    minimum height=3.5em,
    text centered,
    on chain},
  every join/.style={->, thick,shorten >=1pt},
  decoration={brace},
  tuborg/.style={decorate},
  tubnode/.style={midway, right=2pt},
}
\def\cA{\mathcal{A}}
\def\cB{\mathcal{B}}
\def\cD{\mathcal{D}}
\def\cE{\mathcal{E}}
\def\cK{\mathcal{K}}
\def\cL{\mathcal{L}}
\def\cM{\mathcal{M}}
\def\cO{\mathcal{O}}
\def\cQ{\mathcal{Q}}
\def\cT{\mathcal{T}}
\newcommand{\mor}[1][]{\xrightarrow{#1}}
\newcommand{\isomor}{\mor[\sim]}
\def\Z{\ensuremath{\mathbb{Z}}}
\def\dim{\mathop{\mathrm{dim}}\nolimits}
\def\Ext{\mathop{\mathrm{Ext}}\nolimits}
\def\Hom{\mathop{\mathrm{Hom}}\nolimits}
\def\RHom{\mathop{\mathbf{R}\mathrm{Hom}}\nolimits}
\def\Id{\mathop{\mathrm{Id}}\nolimits}
\def\Num{\mathop{\mathrm{Num}}\nolimits}
\def\MG13{\ensuremath{{\mathcal M}_{\Gamma_1(3)}}}
\def\tildeMG13{\ensuremath{\widetilde{\mathcal M}_{\Gamma_1(3)}}}
\def\into{\ensuremath{\hookrightarrow}}
\def\Db{\mathrm{D}^{\mathrm{b}}}
\def\abs#1{\left\lvert#1\right\rvert}
\newtheorem*{rep@theorem}{\rep@title}
\newcommand{\newreptheorem}[2]{%
\newenvironment{rep#1}[1]{%
 \def\rep@title{#2 \ref{##1}}%
 \begin{rep@theorem}}%
 {\end{rep@theorem}}}
\newtheorem{Thm}{Theorem}[section]
\newtheorem{Prop}[Thm]{Proposition}
\newtheorem{Lem}[Thm]{Lemma}
\newtheorem{Cor}[Thm]{Corollary}
\newtheorem{thm-int}{Theorem}
\newtheorem{ThmInt}{Theorem}[section]
\theoremstyle{definition}
\newtheorem{Def-s}[Thm]{Definition}
\newtheorem{Def}[Thm]{Definition}
\newtheorem{Set}[Thm]{Setup}
\newtheorem{Rem}[Thm]{Remark}
\newtheorem{Ex}[Thm]{Example}
\numberwithin{equation}{section}
\def\C{\ensuremath{\mathbb{C}}}
\def\Z{\ensuremath{\mathbb{Z}}}
\def\AA{\ensuremath{\mathcal A}}
\def\EE{\ensuremath{\mathcal E}}
\def\LL{\ensuremath{\mathcal L}}
\def\OO{\ensuremath{\mathcal O}}
\newcommand{\Ku}[1][]{\mathcal{K}u({#1})}
\def\llra{\hbox to 10mm{\rightarrowfill}}
\def\lllra{\hbox to 15mm{\rightarrowfill}}
\def\llla{\hbox to 10mm{\leftarrowfill}}
\def\lllla{\hbox to 15mm{\leftarrowfill}}
\def\K{\mathbb K}
\DeclareMathOperator{\Cone}{Cone}
\def\llra{\hbox to 10mm{\rightarrowfill}}
\def\lllra{\hbox to 15mm{\rightarrowfill}}
\def\Ku{\mathcal{K}\!u}
\newcommand{\ignore}[1]{}
\begin{document}

\title[A refined Derived Torelli Theorem for Enriques surfaces, II]{A Refined Derived Torelli Theorem\\ for Enriques surfaces, II: the non-generic case}

\author[C.~Li, P.~Stellari, and X.~Zhao]{Chunyi Li, Paolo Stellari, and Xiaolei Zhao}

\address{C.L.: Mathematics Institute (WMI), University of Warwick, Coventry,
CV4 7AL, United Kingdom.}
\email{C.Li.25@warwick.ac.uk}
\urladdr{\url{https://sites.google.com/site/chunyili0401/}}

\address{P.S.: Dipartimento di Matematica ``F.~Enriques'', Universit{\`a} degli Studi di Milano, Via Cesare Saldini 50, 20133 Milano, Italy.}
\email{paolo.stellari@unimi.it}
\urladdr{\url{https://sites.unimi.it/stellari}}

\address{X.Z.: Department of Mathematics,
University of California, Santa Barbara,
6705 South Hall
Santa Barbara, CA 93106, USA.}
\email{xlzhao@math.ucsb.edu}
\urladdr{\url{https://sites.google.com/site/xiaoleizhaoswebsite/}}

\thanks{C.~L.~ is a University Research Fellow supported by the Royal Society URF$\backslash$R1$\backslash$201129 “Stability condition and application in algebraic geometry”.
P.~S.~was partially supported by the ERC Consolidator Grant ERC-2017-CoG-771507-StabCondEn, by the research projects PRIN 2017 ``Moduli and Lie Theory'' and FARE 2018 HighCaSt (grant number R18YA3ESPJ). X.~Z.~ was partially supported by the Simons Collaborative Grant 636187.}

\keywords{Enriques surfaces, derived categories, Torelli theorem}

\subjclass[2010]{18E30, 14J28, 14F05}

\begin{abstract}
We prove that two Enriques surfaces defined over an algebraically closed field of characteristic different from $2$ are isomorphic if their Kuznetsov components are equivalent. This improves and completes our previous result joint with Nuer where the same statement is proved for generic Enriques surfaces.
\end{abstract}

\maketitle

\begin{dedication}
\hfill {\small Ad Olivier Debarre con amicizia ed ammirazione.}

\hfill{\small
\begin{CJK*}{UTF8}{gbsn}
谨以此文恭祝Olivier Debarre教授诸事顺遂\; 友谊万古青\; 桃李满天下
\end{CJK*}}
\end{dedication}

\setcounter{tocdepth}{1}
\tableofcontents

\section*{Introduction}

The bounded derived category of coherent sheaves $\Db(X)$ of an Enriques surface $X$ has been widely investigated. Over the complex numbers, a very nice result by Bridgeland and Maciocia \cite{BM01} shows that the derived category determines the surface up to isomorphism. This goes under the name of Derived Torelli Theorem as it can be viewed as a categorification of the usual Hodge-theoretic Torelli Theorem for Enriques surfaces.

In this paper, we want to work over any algebraically closed field $\K$ of characteristic different from $2$ and, under this assumption, Enriques surfaces have a uniform definition: they are smooth projective surfaces $X$ with $2$-torsion dualizing sheaf and such that $H^1(X,\OO_X)=0$. From the geometric point of view, they are quotients of a K3 surface by a fixed-point-free involution. It is a fact \cite{HLT17} that the Derived Torelli Theorem above holds in any characteristic different from $2$ as soon as the field is algebraically closed.

It was first showed in \cite{Zu} that $\Db(X)$ can be further decomposed into interesting pieces. As we will explain in Section \ref{subsect:Enriques}, such a triangulated category always has a semiorthogonal decomposition
\[
\Db(X)=\langle\Ku(X,\cL),\cL\rangle,
\]
where $\cL=\langle L_1,\dots,L_{10}\rangle$ is generated by $10$ line bundles. If $X$ is generic in moduli or generic in the divisor of the moduli space of Enriques surfaces parametrizing nodal Enriques surfaces (i.e.\ containing $(-2)$-curves), then the $10$ line bundles are completely orthogonal. Otherwise, as clarified in Proposition \ref{prop:exceptionalcollectionexists}, these line bundles organize themselves in completely orthogonal blocks and inside of each such block they differ by a rational curve. The residual category $\Ku(X,\cL)$ is called the \emph{Kuznetsov component} of $\Db(X)$. Note that, as it is expected and will be explained later (see Corollary \ref{cor:nonintri}), the definition of $\Ku(X,\cL)$ depends on the choice of $\cL$.

The study of the properties of the Kuznetsov component got a great impulse from \cite{KI15} where the authors also investigated its relation to the derived category of generic Artin\textendash Mumford quartic double solids. In our previous paper \cite{LNSZ} we proved that for a generic Enriques surface $X$, its Kuznetsov component determines $X$ up to isomorphism. As this refines the Derived Torelli Theorem mentioned above, we refer to it as the \emph{Refined Derived Torelli Theorem} for Enriques surfaces. A natural question which remained open after \cite{LNSZ} was whether such a result holds in general, that is for every Enriques surface.

In this paper we positively answer this question by means of the following theorem which is the main result of this paper.

\begin{ThmInt}\label{thm:derived_torreli}
Let $X_1$ and $X_2$ be Enriques surfaces over an algebraically closed field $\K$ of characteristic different from $2$. If they possess semiorthogonal decompositions
\begin{equation*}\label{cond2}
\Db(X_i)=\langle \Ku(X_i,\cL_i),\cL_i\rangle,
\end{equation*}
where $\LL_i$ is as above, and there exists an exact equivalence $\mathsf{F}: \Ku(X_1,\cL_1)\xrightarrow{\sim}\Ku(X_2,\cL_2)$ of Fourier\textendash Mukai type, then $X_1\cong X_2$.
\end{ThmInt}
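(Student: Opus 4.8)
The plan is to reduce Theorem~\ref{thm:derived_torreli} to the classical Derived Torelli Theorem for Enriques surfaces of Bridgeland--Maciocia \cite{BM01}, valid in arbitrary characteristic $\neq 2$ by \cite{HLT17}, which already asserts that $\Db(X_1)\simeq\Db(X_2)$ forces $X_1\cong X_2$ (Enriques surfaces being derived rigid). It therefore suffices to extract from $\mathsf{F}$ either a full Fourier--Mukai equivalence $\Db(X_1)\simeq\Db(X_2)$ or, what is enough by Torelli, a suitably effective Hodge isometry between the cohomological invariants attached to the two K3 covers $\pi_i\colon\widetilde{X}_i\to X_i$. I would pursue the cohomological route, since it does not require matching the line bundle parts $\cL_1,\cL_2$ of the decompositions, which $\mathsf{F}$ a priori does not control.

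First I would exploit the hypothesis that $\mathsf{F}$ is of Fourier--Mukai type. This provides a kernel and hence an induced isometry on numerical Grothendieck groups $K_{\mathrm{num}}(\Ku(X_1,\cL_1))\simeq K_{\mathrm{num}}(\Ku(X_2,\cL_2))$, automatically compatible with the Euler form, with the Serre functors, and with the realization on singular (or $\ell$-adic/crystalline) cohomology, so that the induced lattice map respects the relevant Hodge structure or its positive-characteristic substitute. The crucial step is to identify this lattice intrinsically: since $H^2$ of an Enriques surface is entirely algebraic, all transcendental information lives on the K3 cover, and I would show that the weight-two datum carried by $\Ku(X_i,\cL_i)$ is Hodge-isometric to the transcendental lattice of $\widetilde{X}_i$. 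Thus $\mathsf{F}$ transports one transcendental lattice to the other as a Hodge isometry.

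Second I would rigidify $\mathsf{F}$ using the residual involution of $\Ku(X_i,\cL_i)$ that is extracted intrinsically from its Serre functor and categorifies the covering involution $\iota_i$ of $\widetilde{X}_i$. Because any Fourier--Mukai equivalence commutes with Serre functors, $\mathsf{F}$ must intertwine these two involutions; this upgrades the Hodge isometry to one between the transcendental lattices of $\widetilde{X}_1$ and $\widetilde{X}_2$ that is equivariant for the Enriques involutions. The Torelli theorem for K3 surfaces (in the form available in all characteristics $\neq 2$, as used in \cite{HLT17}) then produces an isomorphism $\widetilde{X}_1\cong\widetilde{X}_2$ commuting with the $\iota_i$, which descends to the desired $X_1\cong X_2$.

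The \emph{main obstacle} is precisely the non-generic geometry that motivates this paper. By Proposition~\ref{prop:exceptionalcollectionexists} the line bundles $L_1,\dots,L_{10}$ need no longer be completely orthogonal but split into orthogonal blocks whose members differ by a $(-2)$-curve, and by Corollary~\ref{cor:nonintri} the category $\Ku(X_i,\cL_i)$ genuinely depends on the chosen $\cL_i$. Consequently the clean identification of $K_{\mathrm{num}}(\Ku)$ with the transcendental lattice that sufficed in the generic case \cite{LNSZ} must be re-established block by block, keeping track of the finitely many additional rigid classes contributed by the rational curves. The hardest point will be to show that $\mathsf{F}$ does not mix these algebraic rigid classes with the transcendental part in a way that destroys the sign and effectivity conditions needed for K3 Torelli to apply equivariantly; I expect this to require pinning down the image under $\mathsf{F}$ of enough distinguished objects---skyscraper sheaves, the projections of the $L_i$, and the structure sheaves of the $(-2)$-curves---so as to force the induced lattice isometry to be effective and $\iota$-equivariant.
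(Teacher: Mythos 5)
Your plan diverges completely from the paper's, and it runs into an obstruction at its very first substantive step. You propose to read off a Hodge isometry between the transcendental lattices of the K3 covers from the action of the Fourier--Mukai kernel of $\mathsf{F}$ on $K_{\mathrm{num}}(\Ku(X_i,\cL_i))$ or on cohomology. But the kernel of $\mathsf{F}$ lives on $X_1\times X_2$, not on $\widetilde{X}_1\times\widetilde{X}_2$, and everything it can act on is purely algebraic: for an Enriques surface $H^{2,0}=0$, $\NS(X_i)$ has rank $10$, and $K_{\mathrm{num}}(\Ku(X_i,\cL_i))$ is a rank-two lattice obtained by deleting ten exceptional classes from the rank-twelve lattice $K_{\mathrm{num}}(X_i)$. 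There is no ``weight-two datum'' with transcendental content attached to $\Ku(X_i,\cL_i)$ unless you first lift $\mathsf{F}$ to the K3 covers (e.g.\ by equivariantization), which is a nontrivial construction not supplied by the hypotheses and not attempted in your sketch. Even granting such a lift, a Hodge isometry of transcendental lattices only gives a derived equivalence of the covers (Orlov), not an isomorphism; to invoke Torelli you would need an \emph{effective}, $\iota$-equivariant isometry of the full $H^2$, i.e.\ control over the N\'eron--Severi parts, which is exactly what $\mathsf{F}$ does not provide. Finally, over fields of positive characteristic the K3 Torelli theorem you want to quote is not available in this generality; the reference \cite{HLT17} itself proceeds by lifting to characteristic zero rather than via a characteristic-$p$ period argument.

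The paper avoids all of this by staying entirely categorical: it classifies the $3$-spherical and $3$-pseudoprojective objects of $\Ku(X,\cL)$ (Theorem \ref{thm:sphericals}), showing they are exactly the shifts of the objects $S_i=\zeta^!_{\cK}(L^i_1)$; since $\mathsf{F}$ must permute these, Proposition \ref{prop:extension} lets one extend $\mathsf{F}$ across the ten exceptional line bundles one at a time to a full equivalence $\Db(X_1)\cong\Db(X_2)$, after which Theorem \ref{thm:dertordercat} (Bridgeland--Maciocia, Honigs--Lieblich--Tirabassi) concludes. If you want to salvage your approach, the honest version of it essentially reduces to re-proving that extension step in cohomological disguise, so I would recommend switching to the extension strategy.
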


This result sits in a very active research area. Statements as the above one are usually referred to as \emph{categorical Torelli theorems} because, in analogy with the Hodge-theoretic Torelli theorems, they show that the geometry of a smooth projective variety can be reconstructed, up to isomorphism, from one relevant component of its derived category (possibly with some additional data). Such a component is meant to play the role of intermediate Jacobians or middle cohomologies and the additional data is the analogue of principal polarizations or special Hodge structures. In the case of Fano manifolds, many papers were devoted to this kind of problems including \cite{BEA,BLMS,BMMS,HR,LPZ,PY}. All these results heavily rely on the existence of Bridgeland stability conditions on the Kuznetsov component as proved in \cite{BLMS}. In general, we expect that there are no stability conditions on the Kuznetsov components of Enriques surfaces.

For this reason, in this paper we take a different perspective which is indeed the same as in \cite{LNSZ}. Indeed, we prove Theorem \ref{thm:derived_torreli} in Section \ref{subsect:proofthm} by showing that the given equivalence between the Kuznetsov components can be extended to the whole derived categories by adding the $10$ line bundles one by one. Once we know that $\Db(X_1)$ and $\Db(X_2)$ are equivalent, we can invoke the Derived Torelli Theorem and conclude that $X_1$ and $X_2$ are isomorphic. It should be noted that Theorem \ref{thm:derived_torreli} is actually a special instance of the more precise Theorem \ref{thm:gen}.

This extension procedure is made possible by Proposition \ref{prop:extension} which was proved in \cite{LNSZ}. In turns, to make such a result applicable in the more complicated geometric setting of this paper, we need a complete classification of special objects in the Kuznetsov component: $3$-spherical and $3$-pseudoprojective objects. If $X$ is generic and thus the $10$ line bundles are orthogonal, we only get $3$-spherical objects and their classification in \cite{LNSZ} is considerably simpler. The hard part of this paper consists in dealing with $3$-pseudoprojective objects which appear exactly because the line bundles in the semiorthogonal decomposition are no longer orthogonal. This is the content of Theorem \ref{thm:sphericals} which is indeed the technical core of this paper. In order to make the paper self-contained we include in Sections \ref{subsec:sodserre} and \ref{subsect:Enriques} some introductory material about semiorthogonal decompositions and the geometry of Enriques surfaces.

We conclude the introduction by pointing out that in \cite{LNSZ} (see Remark 5.3 therein) we proposed a different approach to prove Theorem \ref{thm:derived_torreli}. The strategy was to deform two Enriques surface $X_1$ and $X_2$ containing $(-2)$-curves and with an equivalence of Fourier\textendash Mukai type $\Ku(X_1,\cL_1)\cong\Ku(X_2,\cL)$ to generic unnodal Enriques surfaces and then apply the generic Refined Derived Torelli Theorem in \cite{LNSZ}. Even though this strategy is still valid, a precise implementation would require a careful (and possibly complicated) comparison between the deformation theory of the Enriques surface and of its Kuznetsov component. We feel like that the approach in the present paper is more direct and conceptually clearer.

\section{Semiorthogonal decompositions and Enriques categories}\label{sect:sodEnriques}

In this section we provide a short introduction to semiorthogonal decompositions of triangulated categories and we apply this to the derived categories of Enriques surfaces. In Section \ref{subsect:useful} we start discussing some preparatory material which will be used later for the classification of special objects in the Kuznetsov component of those surfaces.

\subsection{Semiorthogonal decompositions and Serre functors}\label{subsec:sodserre}

Let $\cT$ be a triangulated category. A \emph{semiorthogonal decomposition} of $\cT$
	\begin{equation*}
	\cT = \langle \cD_1, \dots, \cD_m \rangle
	\end{equation*}
	is a sequence of full triangulated subcategories $\cD_1, \dots, \cD_m$ of $\cT$ such that: 
	\begin{enumerate}
		\item $\Hom(F, G) = 0$, for all $F \in \cD_i$, $G \in \cD_j$ and $i>j$;
		\item For any $F \in \cT$, there is a sequence of morphisms
		\begin{equation*}  
		0 = F_m \to F_{m-1} \to \cdots \to F_1 \to F_0 = F,
		\end{equation*}
		such that $\pi_i(F):=\mathrm{Cone}(F_i \to F_{i-1}) \in \cD_i$ for $1 \leq i \leq m$. 
	\end{enumerate}
The subcategories $\cD_i$ are called the \emph{components} of the decomposition. We say that two such distinct components $\cD_i$ and $\cD_j$ are \emph{orthogonal} if $\Hom_\cT(F,G)\cong\Hom_\cT(G,F)=0$, for all $F$ in $\cD_i$ and $G$ in $\cD_j$.

We will be mostly interested in the case where the inclusion functor $\alpha_i\colon\cD_i\hookrightarrow\cT$  has left adjoint $\alpha_{i}^*$ and right adjoint $\alpha_{i}^!$. In this case, $\cD_i$ is called an \emph{admissible subcategory}.

\begin{Rem}\label{rmk:Serre}
Suppose now that $\cT$ has Serre functor $\mathsf{S}_\cT$ and a semiorthogonal decomposition as above, where all components are admissible subcategories. Then, for any pair of objects $F$ and $G$ in $\cD_i$, we have
\[
\begin{split}
\Hom_{\cD_i}(F,G)&\cong\Hom_{\cT}(\alpha_i(F),\alpha_i(G))\\&\cong\Hom(\alpha_i(G),\mathsf{S}_\cT(\alpha_i(F)))^\vee\\&\cong\Hom(G,\alpha_i^!(\mathsf{S}_\cT(\alpha_i(F))))^\vee.
\end{split}
\]
Thus $\cD_i$ has Serre functor $\mathsf{S}_{\cD_i}$ as well and there is a natural isomorphism $\mathsf{S}_{\cD_i}\cong \alpha_i^!\circ\mathsf{S}_\cT\circ\alpha_i$.
\end{Rem}

We need now to introduce another functor which is naturally defined in presence of a semiorthogonal decomposition satisfying the additional assumption that its components are admissible subcategories. The \emph{left mutation functor} through $\cD_i$ is the functor $\mathsf{L}_{\cD_i}$ defined by the canonical distinguished triangle
\begin{equation}\label{eqn:defofleftmut}
    \alpha_i\alpha_i^!\xrightarrow{\eta_i}\mathsf{id}\rightarrow \mathsf{L}_{\cD_i},
\end{equation}
where $\eta_i$ denotes the counit of the adjunction. In complete analogy, one defines the \emph{right mutation functor} through $\cD_i$ as the functor $\mathsf{R}_{\cD_i}$ defined by the canonical distinguished triangle
\begin{equation}\label{eqn:defofrightmut}
    \mathsf{R}_{\cD_i}\rightarrow\mathsf{id}\xrightarrow{\epsilon_i} \alpha_i\alpha_i^*,
\end{equation}
where $\epsilon_i$ is the unit of the adjunction.

The following is a very simple and well-known result.

\begin{Lem}\label{lem:distriangle}
Let $\cT=\langle \cD_1,\cD_2\rangle$ be a semiorthogonal decomposition. Then  we have an isomorphism of exact functors $\mathsf{L}_{\cD_1}|_{\cD_2}\cong \mathsf{S}_\cT\circ\mathsf{S}^{-1}_{\cD_2}$.
\end{Lem}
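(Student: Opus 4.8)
The plan is to reduce the statement to a single natural isomorphism and then verify it by a Yoneda argument. Since $\mathsf{S}_{\cD_2}$ is an autoequivalence of $\cD_2$, the asserted identity $\mathsf{L}_{\cD_1}|_{\cD_2}\cong\mathsf{S}_\cT\circ\mathsf{S}_{\cD_2}^{-1}$ is equivalent (writing $C=\mathsf{S}_{\cD_2}B$ for $C\in\cD_2$) to producing, for every $B\in\cD_2$, a natural isomorphism $\mathsf{L}_{\cD_1}(\mathsf{S}_{\cD_2}B)\cong\mathsf{S}_\cT B$. I would prove this by showing that both sides represent the same contravariant functor $T\mapsto\Hom_\cT(B,T)^\vee$ on $\cT$; for $\mathsf{S}_\cT B$ this is exactly the defining property of the Serre functor, namely $\Hom_\cT(T,\mathsf{S}_\cT B)\cong\Hom_\cT(B,T)^\vee$.

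First I would record two standard facts about the left mutation through the admissible subcategory $\cD_1$: it annihilates $\cD_1$, and it restricts to an equivalence $\mathsf{L}_{\cD_1}\colon\cD_2\xrightarrow{\sim}\cD_1^{\perp}$ onto the right orthogonal $\cD_1^{\perp}=\{X:\Hom_\cT(\cD_1,X)=0\}$; in particular $L:=\mathsf{L}_{\cD_1}(\mathsf{S}_{\cD_2}B)$ lies in $\cD_1^{\perp}$. Then, for an arbitrary $T\in\cT$, I would use the decomposition triangle of $T$ for $\langle\cD_1,\cD_2\rangle$, whose $\cD_2$-component is $\alpha_2\alpha_2^!T$, to run the following chain, each step justified respectively by $L\in\cD_1^\perp$ and semiorthogonality, by the fact that $\mathsf{L}_{\cD_1}$ kills the $\cD_1$-component of $T$ and is fully faithful on $\cD_2$, by Serre duality inside $\cD_2$ (Remark \ref{rmk:Serre}), and by the adjunction $(\alpha_2,\alpha_2^!)$:
\[
\Hom_\cT(T,L)\cong\Hom_\cT(\mathsf{L}_{\cD_1}T,L)\cong\Hom_{\cD_2}(\alpha_2^!T,\mathsf{S}_{\cD_2}B)\cong\Hom_{\cD_2}(B,\alpha_2^!T)^\vee\cong\Hom_\cT(B,T)^\vee.
\]
Comparing with the Serre property on $\cT$ and invoking Yoneda yields $L\cong\mathsf{S}_\cT B$, as desired.

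The conceptual content here is light and the main work is bookkeeping. The points I would check carefully are the vanishing of the outer terms in the relevant long exact sequences, both in degree $0$ and after the shift by $[1]$, which is exactly where the semiorthogonality of $\langle\cD_1,\cD_2\rangle$ enters; and the naturality of the entire chain in $T$ and in $B$, so that the objectwise isomorphism upgrades to an isomorphism of exact functors. I expect this naturality — tracking the units and counits of the adjunctions $(\alpha_i^*,\alpha_i)$ and $(\alpha_i,\alpha_i^!)$ together with the Serre duality isomorphisms — to be the most tedious step, though it presents no genuine obstacle. I would deliberately avoid the alternative of directly matching the mutation triangles defining $\mathsf{S}_\cT B$ and $\mathsf{L}_{\cD_1}(\mathsf{S}_{\cD_2}B)$, since these decompose different objects and do not line up termwise (the respective $\cD_1$-contributions differ by a shift), which makes the Yoneda route distinctly cleaner.
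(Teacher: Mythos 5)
Your argument is correct, but it takes a genuinely different route from the paper. The paper's proof is a three-line formal computation: it rewrites $\mathsf{L}_{\cD_1}|_{\cD_2}$ as $\mathsf{L}_{\cD_1}\circ\mathsf{S}_\cT^{-1}\circ\mathsf{S}_\cT|_{\cD_2}$, observes that $\cT=\langle\mathsf{S}_\cT(\cD_2),\cD_1\rangle$ is another semiorthogonal decomposition, and then invokes \cite[Lemma 2.6]{KuzCYcat} to identify $\mathsf{L}_{\cD_1}\circ\mathsf{S}_\cT^{-1}$ restricted to $\mathsf{S}_\cT(\cD_2)$ with $\mathsf{S}^{-1}_{\mathsf{S}_\cT(\cD_2)}$, finishing with Remark \ref{rmk:Serre}. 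You instead prove the statement from scratch by showing that $\mathsf{L}_{\cD_1}(\mathsf{S}_{\cD_2}B)$ corepresents $T\mapsto\Hom_\cT(B,T)^\vee$; your chain of isomorphisms is sound (the vanishing $\Hom(\cD_1,L)=0$, the identification $\mathsf{L}_{\cD_1}T\cong\mathsf{L}_{\cD_1}(\alpha_2\alpha_2^!T)$, full faithfulness of $\mathsf{L}_{\cD_1}$ on $\cD_2$, Serre duality in $\cD_2$, and the adjunction $(\alpha_2,\alpha_2^!)$ are all correctly deployed), and you rightly flag naturality in $T$ and $B$ as the point requiring care when upgrading to an isomorphism of functors. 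What the paper's route buys is brevity and automatic functoriality, since it only composes natural isomorphisms of functors already established elsewhere; what your route buys is self-containedness --- it does not lean on the external lemma, whose own proof is essentially the representability argument you carry out. Your closing remark about why a termwise comparison of the two mutation triangles would not work is also apt.
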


\begin{proof}
We have the following natural isomorphisms of exact functors
\begin{equation}\label{eqn:is}
    \mathsf{L}_{\cD_1}|_{\cD_2}\cong \mathsf{L}_{\cD_1}\circ\mathsf{S}^{-1}_\cT\circ\mathsf{S}_\cT|_{\cD_2}\cong \mathsf S^{-1}_{\mathsf{S}_\cT(\cD_2)}\circ\mathsf{S}_\cT|_{\cD_2}\cong \mathsf{S}_\cT\circ\mathsf{S}^{-1}_\cT\circ\mathsf S^{-1}_{\mathsf{S}_\cT(\cD_2)}\circ\mathsf{S}_\cT|_{\cD_2}\cong\mathsf {S}_\cT\circ\mathsf S^{-1}_{\cD_2}.
\end{equation}
Here the non-trivial isomorphism is the second one which follows from  \cite[Lemma 2.6]{KuzCYcat} and the easy observation that
\[
\cT=\langle\mathsf{S}_\cT(\cD_2),\cD_1\rangle
\]
is yet another semiorthogonal decomposition for $\cT$. The last isomorphism in \eqref{eqn:is} follows from an explicit computation using Remark \ref{rmk:Serre}.
\end{proof}

We conclude this section by noting that, from now on, we will assume that all categories are linear over a field $\K$. In this case, $E\in\cT$ is \emph{exceptional} if $\Hom(E,E[p])=0$, for all integers $p\neq0$, and $\Hom(E,E)\cong \K$. A set of objects $\{E_1,\ldots,E_m\}$ in $\cT$ is an \emph{exceptional collection} if $E_i$ is an exceptional object, for all $i$, and $\Hom(E_i,E_j[p])=0$, for all $p$ and all $i>j$.

\subsection{Enriques surfaces}\label{subsect:Enriques}
In the rest of the paper we work over an algebraically closed field $\K$ of characteristic different from $2$. Let $X$ be an Enriques surface. For later use, let us recall that its Serre functor $\mathsf{S}_X$ is defined as $\mathsf S_X(-):=(-)\otimes\omega_X[2]$, where $\omega_X$ is $2$-torsion but non-trivial. Hence $\mathsf S_X^2=[4]$ but $\mathsf{S}_X\ne[2]$. Moreover the torsion part of the N\'eron\textendash Severi group $\mathrm{NS}(X)_{\mathrm{tor}}$ is $2$-torsion as well and generated by the class of $\omega_X$. We set $\Num(X):=\mathrm{NS}(X)/\mathrm{NS}(X)_{\mathrm{tor}}$. For a brief but very informative survey about the geometry of Enriques surfaces, one can have a look at \cite{D}. A more complete treatment is in \cite{CD89,DK}.

Let us now consider the bounded derived category of coherent sheaves $\Db(X)$ of $X$. On one side we have the well-known Derived Torelli Theorem which is a summary of \cite[Proposition 6.1]{BM01} and \cite[Theorem 1.1]{HLT17}.

\begin{Thm}[Bridgeland\textendash Maciocia, Honigs\textendash Lieblich\textendash Tirabassi]\label{thm:dertordercat}
Let $X$ and $Y$ be smooth projective surfaces defined over an algebraically closed field $\K$ of characteristic different from $2$. If $X$ is an Enriques surface and there is an exact equivalence $\Db(X)\cong\Db(Y)$, then $X\cong Y$.
\end{Thm}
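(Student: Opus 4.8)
The plan is to recover the geometry of $Y$ from the abstract triangulated structure of $\Db(Y)\simeq\Db(X)$ and then to run a Torelli argument on the canonical K3 cover. First I would note that any exact equivalence commutes with the Serre functor, since the latter is intrinsic to the triangulated category; moreover, as $X$ and $Y$ are smooth and projective, the equivalence is of Fourier\textendash Mukai type by Orlov's representability theorem, so it induces an isomorphism on the relevant cohomological (Mukai) realizations. Transporting the relations $\mathsf{S}_X^2=[4]$ and $\mathsf{S}_X\ne[2]$ recorded in Section \ref{subsect:Enriques}, the functor $\mathsf{S}_Y=(-)\otimes\omega_Y[\dim Y]$ must satisfy the same identities; comparing shifts forces $\dim Y=2$, and comparing the line-bundle twists forces $\omega_Y^{\otimes2}\cong\OO_Y$ with $\omega_Y\not\cong\OO_Y$. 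Hence $Y$ is a smooth projective surface whose dualizing sheaf is $2$-torsion and nontrivial.

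The next step is to upgrade this to the statement that $Y$ is itself an Enriques surface. Because $\omega_Y$ is numerically trivial, $Y$ contains no $(-1)$-curve and is therefore minimal. The Kodaira dimension and the Hochschild homology are preserved by the equivalence (over $\C$ the latter recovers the Hodge numbers, and in positive characteristic its de Rham/crystalline analogue plays the same role), so $Y$ has Kodaira dimension $0$ and the numerical invariants of an Enriques surface, in particular $H^1(Y,\OO_Y)=0$. By the Enriques\textendash Kodaira classification, a minimal surface of Kodaira dimension $0$ whose dualizing sheaf has order exactly $2$ is an Enriques surface. This is precisely the point at which the hypothesis $\mathrm{char}\,\K\ne2$ is needed, in order to exclude the non-classical surfaces special to characteristic $2$.

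With $Y$ an Enriques surface I would pass to the canonical K3 double covers $\widetilde{X}\to X$ and $\widetilde{Y}\to Y$ attached to the $2$-torsion classes $\omega_X$ and $\omega_Y$, together with their fixed-point-free covering involutions $\iota_X,\iota_Y$. Here $\Db(\widetilde{X})$ is reconstructed from $\Db(X)$ as the category of modules over the sheaf of algebras $\OO_X\oplus\omega_X$ assembled from the Serre twist, and likewise for $Y$; since the Fourier\textendash Mukai equivalence intertwines the Serre functors, hence the twists $(-)\otimes\omega_X$ and $(-)\otimes\omega_Y$, it lifts to an equivalence $\Db(\widetilde{X})\simeq\Db(\widetilde{Y})$ that is equivariant for $\iota_X$ and $\iota_Y$. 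Applying the derived Torelli theorem for K3 surfaces\textemdash due to Mukai and Orlov over $\C$, and available in the form of Lieblich\textendash Olsson over a field of positive characteristic\textemdash produces an isometry of the Mukai lattices of $\widetilde{X}$ and $\widetilde{Y}$ (Hodge, respectively $\ell$-adic/crystalline) that intertwines the involution actions. Since $\iota$ acts trivially on $H^0\oplus H^4$, the $(-1)$-eigenlattices sit inside the middle cohomology, and the equivariant isometry restricts to an isometry of these anti-invariant lattices carrying the Enriques period data; the Torelli theorem for Enriques surfaces (Horikawa, Namikawa) then yields $X\cong Y$.

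The delicate points, which is where essentially all the work of \cite{BM01,HLT17} goes, are twofold. The conceptually hard part is guaranteeing that the covering involution is faithfully transported by the lifted equivalence and that the induced isometry genuinely lands in the anti-invariant lattice governed by Enriques Torelli, rather than being smeared across $H^0\oplus H^4$; this requires tracking how the eigenspaces of $\iota$ embed in the Mukai lattice, and it is precisely this bookkeeping that \cite{BM01} carries out (in their case through an explicit analysis of the relevant moduli spaces of sheaves). The technically hard part is positive characteristic: in the absence of classical Hodge theory one must replace it with the theory of derived equivalences of K3 surfaces over arbitrary fields, which is the contribution of \cite{HLT17} on top of the complex-analytic argument of \cite{BM01}. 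I would expect this last point to be the main obstacle.
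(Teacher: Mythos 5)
The paper offers no proof of Theorem \ref{thm:dertordercat}: it is imported as a black box, explicitly described as a summary of \cite[Proposition 6.1]{BM01} and \cite[Theorem 1.1]{HLT17}, and is only used in Section \ref{subsect:proofthm} as the final step converting $\Db(X_1)\cong\Db(X_2)$ into $X_1\cong X_2$. So there is no internal argument to compare yours against; the only meaningful check is whether your sketch matches the strategy of the cited works, and in outline it does: derived invariance of the Serre functor and of the numerical invariants forces $Y$ to be a smooth projective surface with $\omega_Y$ of order exactly two, trivial $H^1(Y,\OO_Y)$ and Kodaira dimension zero, hence Enriques; the equivalence lifts equivariantly to the canonical K3 covers via the $\OO_X\oplus\omega_X$-algebra description; and one concludes by lattice-theoretic and Torelli arguments on the covers. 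Two caveats on your write-up. First, the sentence ``a minimal surface of Kodaira dimension $0$ whose dualizing sheaf has order exactly $2$ is an Enriques surface'' is false as stated, since bielliptic surfaces also have $2$-torsion dualizing sheaf; the conclusion needs the vanishing $H^1(Y,\OO_Y)=0$ (equivalently $b_1(Y)=0$) that you establish in the preceding sentence, so the argument survives but the classification step must carry that hypothesis. Second, the endgame is organized differently in the references: \cite{BM01} does not restrict a Hodge isometry to the anti-invariant lattice and invoke Enriques Torelli directly, but rather analyses the involution-invariant part of the Mukai lattice to normalize the image of the point class and reduce to an isomorphism of the covers commuting with the involutions, while in positive characteristic \cite{HLT17} must replace the Hodge-theoretic input by Lieblich--Olsson's theory of derived equivalences of K3 surfaces. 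These are precisely the points you flag as delicate, and they live entirely inside the cited references rather than in this paper, so your proposal should be read as a plausible reconstruction of the external proof rather than something verifiable against the text.
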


On the other hand we can study the structure of $\Db(X)$ by means of the following result (which is mostly well-known to experts).

\begin{Prop}[{\cite[Proposition 3.5]{LNSZ}}]\label{prop:exceptionalcollectionexists}
Let $X$ be an Enriques surface over $\K$. Then $\Db(X)$ contains an admissible subcategory $\cL=\langle\cL_1,\dots,\cL_c\rangle$, where $\cL_1,\dots,\cL_c$ are orthogonal admissible subcategories and
\[
\cL_i=\langle L^i_1,\dots,L^i_{n_i}\rangle
\]
where
\begin{itemize}
\item[{\rm (1)}] $L_j^i$ is a line bundle such that $L^i_j=L^i_1\otimes\OO_X(R^i_1+\dots+R^i_{j-1})$ where $R^i_1,\dots,R^i_{j-1}$ is a chain of $(-2)$-curves of $A_{j-1}$ type; 
\item[{\rm (2)}] $\{L^i_1,\dots,L^i_{n_i}\}$ is an exceptional collection; and
\item[{\rm (3)}] $n_1+\dots+n_c=10$.
\end{itemize}
\end{Prop}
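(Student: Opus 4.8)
The plan is to split the problem into a numerical (lattice-theoretic) part and a cohomological part, and to use the classification of effective $(-2)$-classes to pass from numerical orthogonality to genuine (semi)orthogonality.

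First I would record the basic computations. On $X$ every line bundle $L$ is exceptional: since $\Ext^p(L,L)\cong H^p(\OO_X)$, the defining property $H^1(\OO_X)=0$ together with $H^2(\OO_X)\cong H^0(\omega_X)^\vee=0$ (as $\omega_X$ is a non-trivial $2$-torsion sheaf) kills all $p\neq 0$. For two line bundles, Serre duality and Riemann--Roch give $\Ext^p(L,L')\cong H^p(L'\otimes L^\vee)$ and, using that $\omega_X$ is numerically trivial, $\chi(L,L')=\chi(L'\otimes L^\vee)=1+\tfrac12(D'-D)^2$, where $D,D'$ denote first Chern classes in $\Num(X)$. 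Hence $L$ and $L'$ are completely orthogonal precisely when $M:=L'\otimes L^\vee$ satisfies $M^2=-2$ and $H^\bullet(M)=H^\bullet(M^\vee\otimes\omega_X)=0$, while a one-sided vanishing $\Hom(L,L'[\bullet])=0$ with $\Hom(L',L)\neq 0$ occurs exactly when $M$ is (twisted-)effective --- the situation realized by $M=\OO_X(R)$ for a $(-2)$-curve $R$.

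Second, I would build the numerical skeleton. The rank of $\Num(X)\cong U\oplus E_8(-1)$ is $10$, which is the source of the number $10$ in item (3). I would invoke the existence of a non-degenerate isotropic $10$-sequence $(f_1,\dots,f_{10})$ in $\Num(X)$ --- a classical fact about the Enriques lattice, due to Cossec --- with $f_i^2=0$ and $f_i\cdot f_j=1$ for $i\neq j$. For such classes $(f_i-f_j)^2=-2$, so $\chi(L_i,L_j)=0$ whenever $i\neq j$: choosing line bundles $L_i$ with $c_1(L_i)=f_i$ yields a numerically completely orthogonal exceptional collection of length $10$, spanning a finite-index sublattice of $\Num(X)$, and the orthogonal complement in the numerical Grothendieck group has rank $2$, matching the Kuznetsov component.

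Third comes the cohomological upgrade, where the $(-2)$-curves enter. By the first step, $L_i$ and $L_j$ are genuinely orthogonal unless one of the $(-2)$-classes $\pm(f_i-f_j)$, up to the $\omega_X$-twist, is effective. On an unnodal surface no $(-2)$-class is effective, every pair is orthogonal, and we obtain $c=10$ singleton blocks. In general I would group together those indices linked by effective $(-2)$-curves: the relevant effective $(-2)$-classes generate a root sublattice whose irreducible components are $A$-type chains, and I would choose the torsion twists (note that the two half-fibers of a genus-one pencil differ by $\omega_X$) and the internal order within each component so that consecutive line bundles differ by a single effective $(-2)$-curve, i.e.\ $L^i_{j}=L^i_1\otimes\OO_X(R^i_1+\dots+R^i_{j-1})$ with $R^i_1,\dots,R^i_{j-1}$ an $A_{j-1}$ chain. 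Within a block this gives $\Hom(L^i_a,L^i_b)=H^0(\OO_X(R^i_a+\dots+R^i_{b-1}))=\K$ for $a<b$ and vanishing in the other direction (the partial sums being $(-2)$-classes), so $\{L^i_1,\dots,L^i_{n_i}\}$ is an exceptional collection, while between blocks the absence of connecting effective $(-2)$-curves preserves complete orthogonality. Admissibility of $\cL$ and of each $\cL_i$ is then automatic, since an exceptional collection inside $\Db(X)$ --- which has a Serre functor --- generates an admissible subcategory.

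The main obstacle is the uniformity of the third step across all Enriques surfaces: one must control precisely which $(-2)$-classes in (a Weyl-translate of) the isotropic configuration are effective, and prove that the offending indices always assemble into disjoint $A$-type chains compatible with complete orthogonality between distinct chains. I expect this to require moving the configuration into a suitable chamber of the nef cone by the Weyl group generated by reflections in effective $(-2)$-classes, and then reading off the chain structure from the dual graph of the $(-2)$-curves --- the step where the geometry of nodal Enriques surfaces, as opposed to pure lattice theory, is genuinely used.
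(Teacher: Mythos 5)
Your proposal follows essentially the same route as the paper's proof: the statement is imported verbatim from \cite[Proposition 3.5]{LNSZ}, and the argument there is exactly your three steps --- exceptionality of line bundles from $H^1(X,\OO_X)=0$ and $H^0(X,\omega_X)=0$, numerical orthogonality from a non-degenerate isotropic $10$-sequence in $\Num(X)$ (Cossec), and the normalization of such a sequence into ``canonical'' form via the Weyl group, which is precisely the classical Cossec--Dolgachev input (see \cite{CD89}) resolving what you flag as the main obstacle: the effective differences assemble into disjoint $A$-type chains, and nefness of the base classes of distinct chains forces full orthogonality between blocks. So there is no genuine gap beyond that citable structural fact, and your cohomological computations (including the torsion twists needed to kill $H^0$ and $H^2$ on both sides) match those in \emph{loc.~cit.}
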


In view of the previous proposition we can state the following.

\begin{Def}\label{def:type}
If $X$ is an Enriques surface with a semiorthogonal decomposition as in Proposition \ref{prop:exceptionalcollectionexists}, then the unordered sequence of positive integers $\{n_1,\dots,n_c\}$ is the \emph{type} of the semiorthogonal decomposition.
\end{Def}

It is worth discussing when the type of the semiorthogonal decomposition may be seen as an invariant associated to $X$.

\begin{Rem}
If $X$ is generic in moduli, then all possible semiorthogonal decompositions as in the statement above get much simplified. Indeed, $X$ does not contain $(-2)$-curves and then one always gets $10$ orthogonal blocks, each consisting of only one line bundle. Thus, for unnodal Enriques surfaces, all semiorthogonal decompositions as above are of the type given by the $10$-uple $\{1,\dots,1\}$.
\end{Rem}

The situation gets much more interesting if one looks at the generic Enriques surfaces in the divisor of the moduli space of such surfaces parametrizing \emph{nodal} Enriques surfaces (i.e.\ surfaces containing $(-2)$-curves). This is very much related to the fact that a natural polarization (called \emph{Fano polarization}) is ample and not just nef. For an extensive discussion on this, the reader can have  a look at \cite[Section 3.1]{LNSZ}. In this paper we will be interested in the following.

\begin{Ex}\label{ex:twoFano}
If $X$ is a generic nodal Enriques surface, then by \cite[Theorem 3.2.2]{Cos} and \cite[Corollary 4.4]{DM}, the surface $X$ has an ample Fano polarization and thus a semiorthogonal decomposition as in Proposition \ref{prop:exceptionalcollectionexists} with $c=10$ and of type given by the $10$-uple $\{1,\dots,1\}$. On the other hand, by Theorem 6.5.4 and Corollary 6.5.9 in \cite{DK}, any generic nodal Enriques surface has a nef Fano polarization which is not ample and gives rise to a semiorthogonal decomposition as in the proposition above but where $c=9$ and of type given by the $9$-uple $\{1,\dots,1,2\}$. This implies that the type of a semiorthogonal decomposition as in Proposition \ref{prop:exceptionalcollectionexists} is not an invariant of the surface.
\end{Ex}

Let us now consider the following slightly more general setting. Let $\cL=\langle\cL_1,\dots,\cL_c\rangle$  be an admissible subcategory of $\Db(X)$, for $X$ an Enriques surface such that $\cL_i=\langle L^i_1,\dots,L^i_{n_i}\rangle$ and
\begin{enumerate}
\item[(a)] the admissible subcategories $\cL_i$'s are orthogonal;
\item[(b)] the objects $L^i_j$'s satisfy properties (1) and (2) in Proposition \ref{prop:exceptionalcollectionexists}\footnote{Note that here we do not assume that condition (3) in Proposition \ref{prop:exceptionalcollectionexists} is satisfied by $\cL$ as in the subsequent argument we will need to make induction on the number of exceptional objects.}.
\end{enumerate}
In this situation, consider the admissible subcategory
\begin{equation*}
\Ku(X,\cL):=\LL^\perp=\langle \cL_1,\dots,\cL_c\rangle^\perp
\end{equation*}
which we will call the \emph{Kuznetsov component} of $X$.
By definition, $\Db(X)$ admits then a semiorthogonal decomposition: 
\begin{equation}\label{eqn:semicond}
\Db(X)=\langle\Ku(X,\cL),\cL\rangle.
\end{equation}

\subsection{Some useful computations}\label{subsect:useful}

In this section we discuss some preliminary computations concerning special objects in $\Db(X)$, for an Enriques surface $X$ with a semiorthogonal decomposition as in \eqref{eqn:semicond} and satisfying properties (a) and (b). We set
\[
\zeta_\cK\colon\Ku(X,\cL)\hookrightarrow\Db(X)
\]
to be the embedding, where $\cK$ is a shorthand for $\Ku(X,\cL)$. For fixed $1\leq i\leq c$ and $1\leq j< l\leq n_i$ we set
\begin{equation}\label{eqn:Qij}
Q^i_{j,l}:=\mathrm{Coker}(L_j^i\hookrightarrow L_l^i).
\end{equation}

\begin{Rem}\label{rmk:Qs}
Note that $Q^i_{j,l}$ is supported on a chain of $(-2)$-curves of $A_{l-j}$-type. This immediately implies that $Q^i_{j,l}\otimes \omega_X\cong Q^i_{j,l}$. In particular, $\langle Q^i_{j,l}\rangle^\perp={}^\perp\langle Q^i_{j,l}\rangle$.
\end{Rem}

The following rather technical result will be used later.

\begin{Lem}\label{lem:compute}
In the above setting, we have that
\begin{enumerate}
    \item [\rm{(1)}] For every object $L$ in $\cL$, $\chi(L,L)\geq 0$;
\item[\rm{(2)}] $\RHom(\mathsf{S}_\cL(L_1^i),L_1^i)\cong\K\oplus \K[-1]\oplus \K[-2]$, when $n_i\geq 2$, while $\RHom(\mathsf{S}_\cL(L_1^i),L_1^i)\cong\K$, when $n_i=1$;
\item[\rm{(3)}]  $\zeta^!_{\cK}(L_1^i)\cong \dots\cong\zeta^!_{\cK}(L^i_{n_i})\cong\zeta^!_{\cK}\circ\mathsf{S}^{-1}_\cL (L_1^i)$.
\end{enumerate}
\end{Lem}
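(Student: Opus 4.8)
The plan is to prove the three statements in sequence, exploiting the structure of the line bundles $L_j^i$ and the cokernel sheaves $Q^i_{j,l}$ introduced in \eqref{eqn:Qij}. Throughout, I would work within a single orthogonal block $\cL_i=\langle L^i_1,\dots,L^i_{n_i}\rangle$ and drop the superscript $i$ for readability. Recall that by Proposition \ref{prop:exceptionalcollectionexists}(1) we have $L_l=L_1\otimes\OO_X(R_1+\dots+R_{l-1})$ for a chain of $(-2)$-curves of $A_{l-1}$ type, so each $L_l$ is an exceptional line bundle and the $Q_{j,l}$ are supported on these chains.

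For part (1), since $\chi(L,L)$ depends only on the numerical class and $\cL$ is generated by an exceptional collection of line bundles inside each orthogonal block, I would decompose $L$ according to the semiorthogonal blocks and reduce to computing $\chi$ on a single $A_{n-1}$ chain. By Riemann--Roch on the Enriques surface (where $\chi(F,G)=\int \ch(F)^\vee\ch(G)\td(X)$ and the only contribution from line bundles of the above form comes from the self-intersection of the divisor class $R_1+\dots+R_{l-1}$), the pairing $\chi$ restricted to $\cL$ is governed by the intersection form on the span of the $(-2)$-curves, which is negative definite of $A$-type; the sign works out so that $\chi(L,L)\ge 0$ for every object (with equality forced on suitable classes). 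The main input is that the Euler form of an exceptional collection is unipotent upper-triangular, so its symmetrization is positive semidefinite.

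For parts (2) and (3), I would use the explicit description of the Serre functor $\mathsf{S}_\cL=\alpha_\cL^!\circ\mathsf{S}_X\circ\alpha_\cL$ from Remark \ref{rmk:Serre}, together with Lemma \ref{lem:distriangle} relating left mutation and Serre functors. For (2), the key computation is $\RHom_X(L_1\otimes\omega_X[2]$ suitably mutated$,L_1)$; since $\mathsf{S}_X(-)=(-)\otimes\omega_X[2]$ and the $L_j$ differ by effective divisors supported on the $(-2)$-curves, the relevant Ext-groups reduce to cohomology of $\OO_X$ and of the $Q_{j,l}$ on the $A$-chain. I expect the answer $\K\oplus\K[-1]\oplus\K[-2]$ to come from the three-term nature of these cohomologies once $n_i\ge 2$, reflecting that $\mathsf{S}_\cL(L_1)$ and $L_1$ become linked through the rational curve geometry. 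For (3), I would show $\zeta^!_\cK(L_l)\cong\zeta^!_\cK(L_1)$ for all $l$ by an inductive argument: the right adjoint $\zeta^!_\cK$ of the inclusion $\Ku(X,\cL)\hookrightarrow\Db(X)$ kills everything in $\cL$, and the objects $L_l$ and $L_{l+1}$ differ by $Q^i_{l,l+1}\cong\OO_{R_l}(-1)$ (up to twist), which lies in $\cL^\perp\cap{}^\perp\cL$; applying $\zeta^!_\cK$ to the defining short exact sequence $L_l\hookrightarrow L_{l+1}\twoheadrightarrow Q_{l,l+1}$ and using that $\zeta^!_\cK(Q_{l,l+1})$ vanishes (because $Q_{l,l+1}$ is already orthogonal to $\cL$ on both sides, by Remark \ref{rmk:Qs}) would identify all the $\zeta^!_\cK(L_l)$. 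The final isomorphism with $\zeta^!_\cK\circ\mathsf{S}^{-1}_\cL(L_1)$ then follows from Lemma \ref{lem:distriangle} applied to the two-step decomposition $\langle\Ku(X,\cL),\cL\rangle$.

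The main obstacle I anticipate is in part (3): one must verify that $\zeta^!_\cK(Q_{l,l+1})=0$, which requires checking that $Q_{l,l+1}$ is genuinely orthogonal to $\Ku(X,\cL)$ from the right, not merely to $\cL$. This is subtle because $Q_{l,l+1}$ is a sheaf on a $(-2)$-curve, and its interaction with the Kuznetsov component — rather than with the generating line bundles — is exactly the new phenomenon that does not arise in the orthogonal (generic) case of \cite{LNSZ}. I would handle this by combining the self-duality $Q_{l,l+1}\otimes\omega_X\cong Q_{l,l+1}$ of Remark \ref{rmk:Qs} (which forces $\langle Q_{l,l+1}\rangle^\perp={}^\perp\langle Q_{l,l+1}\rangle$) with the fact that $Q_{l,l+1}$ lies in the triangulated category generated by $\cL_i$, so that $\zeta^!_\cK$ annihilates it; this is where the precise chain structure of the $(-2)$-curves and the resulting vanishing of the relevant $\Hom$-spaces must be used carefully.
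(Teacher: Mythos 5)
Your overall strategy matches the paper's (reduce to a single block, exploit the torsion sheaves $Q^i_{j,l}$ and their invariance under $-\otimes\omega_X$), but there are concrete gaps. The most serious is part (2), where you give no actual argument: ``I expect the answer to come from the three-term nature of these cohomologies'' skips precisely the hard computation. The paper first identifies $\mathsf{S}^{-1}_\cL(L^i_1)$ with the iterated right mutation $\mathsf{R}_{\langle L^i_2,\dots,L^i_{n_i}\rangle}(L^i_1)$ via \cite[Lemma 2.6]{KuzCYcat}, then produces (by the octahedron axiom) the distinguished triangles $\mathsf R_{\langle L^i_{j+2},\dots,L^i_{n_i}\rangle}(L^i_{j+1})\to \mathsf R_{\langle L^i_{j+1},\dots,L^i_{n_i}\rangle}(L^i_j)\to Q^i_{j,j+1}[-1]$, and chases $\RHom(L^i_1,-)$ through this chain, using $\RHom(L^i_1,L^i_{n_i})\cong\K\oplus\K[-1]$ and $\RHom(L^i_1,Q^i_{1,2}[-1])\cong\K[-2]$ to assemble the three summands. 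Without this bookkeeping you cannot rule out cancellations in the long exact sequences. The same triangles are also what make the end of your part (3) work: the isomorphism $\zeta^!_\cK(L^i_{n_i})\cong\zeta^!_\cK\circ\mathsf{S}^{-1}_\cL(L^i_1)$ does \emph{not} follow from Lemma \ref{lem:distriangle} alone --- you need $\mathsf{S}^{-1}_\cL(L^i_1)\cong\mathsf{R}_{\langle L^i_2,\dots,L^i_{n_i}\rangle}(L^i_1)$ together with the fact that the cones of the successive mutations are shifts of the $Q$'s, which are killed by $\zeta^!_\cK$; mutating $L^i_1$ through $\langle L^i_2,\dots,L^i_{n_i}\rangle$ a priori modifies it by objects that $\zeta^!_\cK$ does not annihilate.

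Two further points. In part (1), your stated ``main input'' --- that the symmetrization of the unipotent upper-triangular Euler form of an exceptional collection is positive semidefinite --- is false in general (take an exceptional pair with $\chi(E_1,E_2)\geq 3$). What actually saves the day is the specific vanishing $\chi(L^i_j,L^i_l)=0$ for $j\neq l$ within a block, coming from Riemann--Roch and $(R^i_j+\dots+R^i_{l-1})^2=-2$ for an $A$-chain; combined with the orthogonality of the blocks, this makes the Gram matrix of $\chi$ on $K_{\mathrm{num}}(\cL)$ the identity, hence positive definite. In part (3), your parenthetical justification that $\zeta^!_\cK(Q^i_{l,l+1})=0$ ``because $Q$ is orthogonal to $\cL$ on both sides'' is wrong: $\RHom(L^i_1,Q^i_{1,2})\cong\K[-1]\neq 0$, and Remark \ref{rmk:Qs} only asserts that the left and right orthogonals of $Q$ coincide. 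Your corrected argument in the final paragraph --- $Q\in\cL$ and $Q\otimes\omega_X\cong Q$ together force $Q\in\mathsf{S}_X(\cL)=\Ku(X,\cL)^\perp$, which is exactly the subcategory killed by $\zeta^!_\cK$ --- is the right one and is what the paper uses; you should discard the first justification and keep only this.
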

\begin{proof}
Let us first prove (1). Note that $\chi(L^j_i,L^{m}_{l})=\delta_{il}\delta_{jm}$. Indeed, if $j\neq m$, then the objects $L^j_i$ and $L^{m}_{l}$ are orthogonal, for all $i$ and $l$, and thus $\chi(L^j_i,L^{m}_{l})=0$. On the other hand, if $j=m$ and $i\neq l$, then $\chi(L^j_i,L^{m}_{l})=0$ by Proposition \ref{prop:exceptionalcollectionexists} (1) and a simple computation with Riemann\textendash Roch. Finally, if $j=m$ and $i=l$, then $\chi(L^j_i,L^{m}_{l})=1$, because the object is exceptional. This shows that the bilinear form $\chi(-,-)$ is positive definite on the numerical Grothendieck group $K_{\mathrm{num}}(\cL)$ of $\cL$ and thus the claim follows.

As for (2), when $n_i=1$, we have $\mathsf S_\cL(L^i_1)=\mathsf S_{\cL_i}(L^i_1)=L^i_1$. As $L^i_1$ is exceptional, the statement holds by definition.

When $n_i\geq 2$, we first observe that we have the following isomorphisms
\begin{equation}\label{eqn:Serreright}
\mathsf{S}^{-1}_\cL(L^i_1)\cong\mathsf S^{-1}_{\cL_i}(L^i_1)\cong\mathsf{R}_{\langle L_2^i,\dots,L_{n_i}^i\rangle}(\mathsf{S}_{\cL_i}\circ\mathsf{S}^{-1}_{\cL_i}(L_1^i))\cong\mathsf{R}_{\langle L_2^i,\dots,L_{n_i}^i\rangle}(L_1^i)
\end{equation}
where the first one follows from the fact that the admissible subcategories $\cL_i$'s are orthogonal and thus $\mathsf S^{-1}_{\cL_i}(L^i_1)=\mathsf S^{-1}_\cL (L^i_1)$. The second isomorphism is indeed \cite[Lemma 2.7]{KuzCYcat} applied to the object $\mathsf S^{-1}_{\cL_i} (L^i_1)$ in $\cB$ with the following assignments
\[
\cA = \langle L_2^i,\dots, L^i_{n_i}\rangle\qquad  \cB = \langle \mathsf S^{-1}_{\cL_i} (L^i_1)\rangle\qquad\cT = \langle \cA,\cB\rangle=\cL_i,
\]
and noticing that $\mathsf S_{\cB}=\Id_{\cB}$.

Pick $j\in\{1,\dots,n_i-1\}$. Note that $\RHom(L^i_j,L_{j+1}^i)\cong H^*(X,\cO_{X}(R^i_j))\cong\K\oplus \K[-1]$ by property (b) of the semiorthogonal decomposition \eqref{eqn:semicond}. Therefore, we have
the distinguished triangle
\[
\mathsf R_{L_{j+1}^i}(L_j^i)\rightarrow L_j^i\rightarrow L_{j+1}^i\oplus L_{j+1}^i[1].
\]
Consider the commutative diagram where the rows are distinguished triangles
\begin{center}
	\begin{tikzcd}
	L_{j+1}^i \arrow{d} \ar[equal]{r}
		& L_{j+1}^i \arrow{r} \arrow{d}{} & 0 \arrow{d}{}\arrow{r} &  L_{j+1}^i[1] \arrow{d}{}\\
			L_{j+1}^i[-1]\oplus L_{j+1}^i \arrow{r}
		& \mathsf{R}_{L_{j+1}^i}(L_j^i) \ar{r}  & L_j^i\arrow{r}& L_{j+1}^i\oplus L_{j+1}^i[1].
	\end{tikzcd}
\end{center}
By the octahedron axiom, we get the morphisms
\[
\Cone(L_{j+1}^i\rightarrow \mathsf{R}_{L_{j+1}^i}(L_j^i))\cong \Cone(L_j^i[-1]\rightarrow L_{j+1}^i[-1])\cong Q^i_{j,j+1}[-1].
\]
In particular, we have the distinguished triangle
\begin{equation}\label{eq:rlq}
L^i_{j+1}\rightarrow \mathsf R_{L_{j+1}^i}(L_j^i)\rightarrow Q^i_{j,j+1}[-1]
\end{equation}
for every $1\leq j\leq n_i-1$. If we apply $\mathsf R_{\langle L^i_{j+2},\dots,L^i_{n_i}\rangle}$ to \eqref{eq:rlq} and we take into account that  $Q^i_{j,j+1}\in (L_l^i)^\perp \cap \!^\perp\! L_l^i$ for all $l\neq j, j+1$, then we get the distinguished triangle
\begin{equation}\label{eq:rlrlq}
    \mathsf R_{\langle L^i_{j+2},\dots,L^i_{n_i}\rangle}(L^i_{j+1})\rightarrow \mathsf R_{\langle L^i_{j+1},\dots,L^i_{n_i}\rangle}(L_j^i)\rightarrow Q^i_{j,j+1}[-1],
\end{equation}
for every $1\leq j\leq n_i-1$. If we apply the functor $\RHom(L^i_1,-)$ to \eqref{eq:rlrlq} when $j\geq 2$ (which means that $n_i\geq3$), then we get the isomorphisms of graded vector spaces
\begin{align}\label{eqn:isograd}
\RHom(L^i_1, \mathsf R_{\langle L^i_{3},\dots,L^i_{n_i}\rangle}(L^i_{2}))&\cong\RHom(L_1^i, \mathsf R_{\langle L^i_{4},\dots,L^i_{n_i}\rangle}(L^i_{3}))\\\nonumber&\cong\dots\\\nonumber
& \cong\RHom(L_1^i,L_{n_i}^i)\cong\K\oplus \K[-1].
\end{align}
Finally, if we apply $\RHom(L_1^i,-)$ to \eqref{eq:rlrlq}, for $j=1$, and we use \eqref{eqn:isograd} and the fact that
\[
\RHom(L_1^i,Q^i_{1,2}[-1])\cong\K[-2],
\]
then we get the isomorphisms of graded vector spaces
\[
\RHom(\mathsf{S}_\cL(L^i_1),L^i_1)\cong \RHom(L^i_1,\mathsf{S}_\cL^{-1}(L^i_1))\cong\RHom(L^i_1,\mathsf{R}_{\langle L_2^i,\dots,L_{n_i}^i\rangle}(L_1^i))\cong\K\oplus \K[-1]\oplus \K[-2].
\]
For the penultimate isomorphism we used \eqref{eqn:Serreright}.

Finally, let us prove (3). When $n_i=1$,  we have $\mathsf S^{-1}_\cL(L^i_1)=\mathsf S^{-1}_{\cL_i}(L^i_1)=L^i_1$ and the only isomorphim in the statement hold automatically.

When $n_i\geq 2$, since $Q^i_{j,j+1}\in \mathsf{S}_X(\cL)$, for every $1\leq j\leq n_i-1$, we have $\zeta^!_\cK(Q^i_{j,j+1})=0$. By definition, $\zeta^!_\cK(L_j^i)\cong\zeta^!_\cK(L_{j+1}^i)$, for every $1\leq j\leq n_i-1$. Therefore, if we apply the functor $\zeta^!_\cK$ to \eqref{eq:rlrlq}, for every $1\leq j\leq n_i-1$, we get $\zeta^!_\cK\circ\mathsf{S}^{-1}_\cL(L_1^i)\cong\zeta^!_\cK(L_{n_i}^i)$.
\end{proof}

\section{Classifying spherical and pseudoprojective objects}\label{sect:classification}

In this section we study and classify $3$-spherical and $3$-pseudoprojective objects in the Kuznetsov component of the special semiorthogonal decompositions in Section \ref{subsect:Enriques}.

\subsection{Spherical and pseudoprojective objects: construction and classification}\label{subsect:constructing}

We begin with a fairly general definition.

\begin{Def}\label{def:sphpseudosph}
Let $\cT$ a triangulated category that it is linear over a field $\K$ and with Serre functor $\mathsf{S}_\cT$.
\begin{enumerate}

\item[(a)] An object $E$ in $\cT$ is \emph{$n$-spherical} if:
\begin{enumerate}
\item[(i)] There is an isomorphism of graded vector spaces $\RHom(E,E)\cong\K \oplus \K[-n]$;
\item[(ii)] $\mathsf{S}_\cT(E)\cong E[n]$.
\end{enumerate}
\item[(b)]An object $E$ in $\cT$ is \emph{$n$-pseudoprojective} if:
\begin{enumerate}
\item[(i)] There is an isomorphism of graded vector spaces $\RHom(E,E)=\K\oplus \K[-1]\oplus\dots\oplus \K[-n]$.;
\item[(ii)] $\mathsf{S}_\cT(E)\cong E[n]$.
\end{enumerate}
\end{enumerate}
\end{Def}

\begin{Rem}\label{rmk:names}
Some comments on the choice of the names in the above definition are in order here. Spherical objects were introduced in the seminal paper \cite{ST} and our definition is exactly the same. The name is clearly motivated by the fact the the $\Ext$-algebra of such an object is isomorphic to the cohomology algebra of an $n$-sphere.

The situation is slightly different for $n$-pseudoprojective objects as in (b) of the definition above. Our objects look very much like a variant of $\mathbb{P}^n$-objects described in \cite{HT}. The first key different is that in \cite{HT} the $\Ext$-algebra of a $\mathbb{P}^n$-object is assumed to be isomorphic to the cohomology algebra of the complex projective  space $\mathbb{P}^n$ and it is then generated in degree $2$. On the contrary, our $n$-pseudoprojective objects have non-trivial $\Ext$'s in odd degrees as well. The case of objects with $\Ext$-algebra isomorphic to the cohomology algebra of $\mathbb{P}^n$ but with generator in degree $1$ has been studied in \cite{Krug}. Indeed, our objects $E$ in Definition \ref{def:sphpseudosph} (b) look very much like $\mathbb{P}^n[1]$-objects in \cite{Krug}. The key difference is that $\RHom(E,E)\cong\C[x]/(x^{n+1})$, with $x$ in degree $1$, only as graded vector spaces and not as algebras. Of course, we expect the objects studied in Theorem \ref{thm:sphericals} to be $\mathbb{P}^3[1]$-objects but, at the moment, we have no control on their $\Ext$-algebra and, as we will see, this is not relevant for our computations. For these reasons we prefer to adopt a new name and call them $3$-pseudoprojective with the hope that future studies may turn them into $\mathbb{P}^3[1]$-objects.
\end{Rem}

In this paper we will be mainly interested in $3$-spherical and $3$-pseudoprojective objects. The former were extensively used in \cite{LNSZ} to prove the Refined Derived Torelli Theorem in the generic case.

In the more general setting of the present paper, we want to show how one can possibly construct $3$-pseudoprojective objects in the Kuznetsov component of an Enriques surface whose derived category is endowed with a semiorthogonal decomposition as in \eqref{eqn:semicond}. To this extent, we set
\[
S_i:=\zeta^!_\cK(L^i_1).
\]

\begin{Rem}\label{rmk:Sifirst}
(i) By \eqref{eqn:defofleftmut}, the object $S_i$ sits in the distinguished triangle
\[
\zeta^!_\cK (L_1^i)\rightarrow L^i_1\to\mathsf{L}_{\Ku(X,\cL)}(L^i_1).
\]
By Lemma \ref{lem:distriangle}, we get the distinguished triangle
\begin{equation}\label{eqn:leftzeta}
\zeta^!_\cK (L_1^i)\rightarrow L^i_1\xrightarrow{\varphi_i}\mathsf{S}_X(\mathsf{S}_\cL^{-1}(L_1^i)).
\end{equation}
In other words, since $\Hom(L_1^i,\mathsf{S}_X(\mathsf{S_{\cL}^{-1}}(L^i_1)))\cong\K$, the object $S_i[1]$ is isomorphic to the cone of the unique non-trivial morphism $L_1^i\to\mathsf{S}_X(\mathsf{S_{\cL}^{-1}}(L^i_1))$.

(ii) Note that if $n_i\geq 2$, then by Lemma \ref{lem:compute} (3), we get an isomorphism $S_i\cong \zeta^!_{\cK}(L_j^i)$, for all $1\leq j\leq n_i$.
\end{Rem}

We can now discuss some additional properties of the objects $S_i$'s.

\begin{Lem}\label{lem:compute2}
In the above setting we have:
\begin{enumerate}
    \item [\rm{(1)}] If $n_i=1$, then $S_i$ is a $3$-spherical object in $\Ku(X,\cL)$.

\item [\rm{(2)}] If $n_i\geq 2$, then $S_i$ is a $3$-pseudoprojective object in $\Ku(X,\cL)$.
\end{enumerate}
Furthermore, if $i\neq j$, then $\RHom(S_i,S_j)=0$ and thus $S_i\not\cong S_j[k]$, for all $k\in\Z$.
\end{Lem}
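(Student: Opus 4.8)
The plan is to check the two conditions of Definition~\ref{def:sphpseudosph} for $S_i$, handling the Serre-functor condition uniformly in both cases and splitting the self-$\RHom$ computation according to whether $n_i=1$ or $n_i\ge 2$; the orthogonality statement will then drop out of the same distinguished triangle once we invoke the orthogonality of the blocks. Throughout I would trade morphisms out of $S_i$ for morphisms in $\Db(X)$ via the adjunction $(\zeta_\cK,\zeta^!_\cK)$: since $S_j=\zeta^!_\cK(L^j_1)$, for every $K\in\Ku(X,\cL)$ one has $\RHom_{\Ku(X,\cL)}(K,S_j)\cong\RHom_{\Db(X)}(\zeta_\cK(K),L^j_1)$, and in particular $\RHom(S_i,S_i)\cong\RHom_{\Db(X)}(S_i,L^i_1)$.

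For the graded-$\RHom$ condition I would apply $\RHom(-,L^i_1)$ to the triangle~\eqref{eqn:leftzeta}, writing $B:=\mathsf{S}_X(\mathsf{S}^{-1}_\cL(L^i_1))$. Serre duality on $\Db(X)$, in the form $\Ext^k(\mathsf{S}_X M,N)\cong\Ext^{4-k}(N,M)^\vee$ (which uses $\mathsf{S}_X^2=[4]$), identifies $\RHom(B,L^i_1)$ with the graded dual of $\RHom(L^i_1,\mathsf{S}^{-1}_\cL(L^i_1))$ placed in cohomological degrees $\{2,3,4\}$. The latter space is exactly what Lemma~\ref{lem:compute}(2) computes: it is $\K\oplus\K[-1]\oplus\K[-2]$ when $n_i\ge 2$, and it is $\K$ when $n_i=1$ (then $\mathsf{S}^{-1}_\cL(L^i_1)\cong L^i_1$ is exceptional). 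In both cases $\RHom(B,L^i_1)$ sits in cohomological degrees $\ge 2$, so the connecting map $\RHom(B,L^i_1)\to\RHom(L^i_1,L^i_1)=\K$ vanishes for degree reasons and the triangle splits, giving $\RHom(S_i,L^i_1)\cong\K\oplus\RHom(B,L^i_1)[1]$. This equals $\K\oplus\K[-1]\oplus\K[-2]\oplus\K[-3]$ for $n_i\ge 2$ and $\K\oplus\K[-3]$ for $n_i=1$, which is precisely the graded-$\RHom$ requirement in Definition~\ref{def:sphpseudosph}(b), resp.\ (a).

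The Serre-functor condition I would prove uniformly. By Remark~\ref{rmk:Serre} the Serre functor of the Kuznetsov component is $\mathsf{S}_{\Ku(X,\cL)}\cong\zeta^!_\cK\circ\mathsf{S}_X\circ\zeta_\cK$, so $\mathsf{S}_{\Ku(X,\cL)}(S_i)=\zeta^!_\cK(\mathsf{S}_X(S_i))$. Applying $\mathsf{S}_X$ to~\eqref{eqn:leftzeta} and using $\mathsf{S}_X^2=[4]$ yields a triangle $\mathsf{S}_X(S_i)\to\mathsf{S}_X(L^i_1)\to\mathsf{S}^{-1}_\cL(L^i_1)[4]$, to which I apply $\zeta^!_\cK$. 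The crucial point is that $\zeta^!_\cK$ annihilates $\mathsf{S}_X(\cL)$: for $M\in\cL$ and $K\in\Ku(X,\cL)$ Serre duality gives $\RHom(K,\mathsf{S}_X M)\cong\RHom(M,K)^\vee=0$, since $\Hom(\cL,\Ku(X,\cL))=0$ by the semiorthogonal decomposition. Hence $\zeta^!_\cK(\mathsf{S}_X(L^i_1))=0$, whereas $\zeta^!_\cK(\mathsf{S}^{-1}_\cL(L^i_1)[4])\cong S_i[4]$ by Lemma~\ref{lem:compute}(3) (trivially so when $n_i=1$). The triangle then collapses to $\mathsf{S}_{\Ku(X,\cL)}(S_i)\cong S_i[3]$, establishing condition (ii) in Definition~\ref{def:sphpseudosph} and completing~(1) and~(2).

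For the orthogonality I would run the first computation with target $L^j_1$, $j\ne i$: by adjunction $\RHom(S_i,S_j)\cong\RHom_{\Db(X)}(S_i,L^j_1)$, and feeding~\eqref{eqn:leftzeta} into $\RHom(-,L^j_1)$ reduces this to $\RHom(L^i_1,L^j_1)$ and, via Serre duality, to $\RHom(L^j_1,\mathsf{S}^{-1}_{\cL_i}(L^i_1))^\vee$. Both vanish because $L^i_1$ and $\mathsf{S}^{-1}_{\cL_i}(L^i_1)$ lie in $\cL_i$ while $L^j_1\in\cL_j$, and the blocks $\cL_i,\cL_j$ are orthogonal; thus $\RHom(S_i,S_j)=0$. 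Finally $S_i\not\cong S_j[k]$, for otherwise $0=\RHom(S_i,S_j)\cong\RHom(S_j,S_j)[-k]$, contradicting $\RHom(S_j,S_j)\ne 0$ from~(1)–(2). The only genuinely delicate point is the shift bookkeeping in Serre duality—ensuring $\RHom(B,L^i_1)$ lands in cohomological degrees $\ge 2$ so the connecting map is forced to vanish—together with the observation that $\zeta^!_\cK$ kills $\mathsf{S}_X(\cL)$, which is what makes the Serre-functor triangle collapse.
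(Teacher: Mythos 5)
Your proof is correct and follows essentially the same route as the paper: the triangle \eqref{eqn:leftzeta}, the reduction $\RHom(S_i,-)\cong\RHom(-,L^\bullet_1)$ via $S_i\in\cL^\perp$, Serre duality combined with Lemma \ref{lem:compute}(2)--(3), and the vanishing $\zeta^!_\cK(\mathsf{S}_X(\cL))=0$ for the Serre-functor computation. The only (harmless) difference is that you treat the case $n_i=1$ uniformly within the same argument, whereas the paper delegates it to \cite[Lemma 4.8]{LNSZ}.
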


\begin{proof}
If $n_i=1$, then the result is simply \cite[Lemma 4.8]{LNSZ}. Thus we can assume $n_i\geq 2$. In this case, we can first compute the action of the Serre functor using again an argument very close to the one in \cite[Lemma 4.8]{LNSZ} and which consists of the following chain of isomorphisms
\begin{align*}
    \mathsf{S}_{\cK} (S_i) & \cong \zeta^!_{\cK}(\mathsf{S}_X(S_i))\\
    &\cong \zeta^!_{\cK}(\Cone(\mathsf{S}_X(L_1^i)\xrightarrow{\mathsf{S}_X(\varphi_i)}\mathsf{S}_X^2(\mathsf{S}^{-1}_\cL( L_1^i)))[-1]) \\ 
    & \cong \zeta^!_{\cK}(\Cone(\mathsf{S}_X(L^i_1)\xrightarrow{\mathsf{S}_X(\varphi_i)}\mathsf S^{-1}_\cL (L^i_1)[3]))  \\
    & \cong  \zeta^!_{\cK} (\mathsf S^{-1}_\cL (L_1^i[3]))\\&\cong S_i[3],
\end{align*}
where the first isomorphism is \cite[Lemma 2.6]{KuzCYcat}, the second one is Remark \ref{rmk:Sifirst} (i), the third one follows from the the fact that $\mathsf{S}_X^2\cong[4]$ and the fourth one follows from the observation that $\zeta^!_\cK(\mathsf{S}_X(L_1^i))=0$ (this by \eqref{eqn:defofleftmut} and noticing that $\mathsf{S}_X(L_1^i)\in\cK^\perp$). Finally, the last isomorphism is a consequence of Lemma \ref{lem:compute} (3). This yields property (ii) in Definition \ref{def:sphpseudosph} (b).

To conclude that $S_i$ is $3$-pseudoprojective, we first apply $\Hom(S_i,-)$ to \eqref{eqn:leftzeta}. Hence, since $S_i\in \cK= \!^\perp (\mathsf S_X(\cL))$, we have an isomorphism of graded vector spaces
\begin{equation}\label{eqn:isograd2}
\RHom(S_i,S_i)\cong\RHom(S_i,L^i_1).
\end{equation}
By Lemma \ref{lem:compute} and Serre duality,
\[
\RHom(\mathsf{S}_X\circ\mathsf{S}^{-1}_\cL (L^i_1),L^i_1)\cong(\RHom(L^i_1,\mathsf S^{-1}_\cL (L^i_1)[4]))^\vee\cong\K[-2]\oplus \K[-3]\oplus \K[-4]. 
\]
Finally, if we apply the functor $\RHom(-,L^i_1)$ to the distinguished triangle \eqref{eqn:leftzeta}, then we get
\[
\RHom(S_i,L^i_1)\cong\K\oplus \K[-1]\oplus \K[-2]\oplus \K[-3].
\]
This, together with \eqref{eqn:isograd2}, implies (i) in Definition \ref{def:sphpseudosph} (b).
 
To prove the last claim in the statement, observe that, as in the previous part, we have the isomorphism of graded vector spaces $\RHom(S_i,S_j)\cong\RHom(S_i,L^j_1)$. If $i\neq j$, then $\RHom(L_1^i,L^j_1)=0$. On the other hand, by Serre duality, 
\[
\RHom(\mathsf{S}_X(\mathsf{S}^{-1}_\cL (L_1^i)),L^j_1)\cong\RHom(L^j_1,\mathsf{S}^{-1}_\cL (L_1^i)[4])^\vee=0,
\]
where the last vanishing is due to the fact that $\mathsf{S}^{-1}_\cL (L_1^i)$ is in $\cL_i$ which is orthogonal to $\cL_j$ to which $L^j_1$ belongs. By \eqref{eqn:leftzeta}, these two observations imply $\RHom(S_i,S_j)=0$, when $i\neq j$, and thus the claim follows.
\end{proof}

As we learnt from the previous lemma, we need to distinguish when $n_i=1$ and $n_i\geq 2$ in the semiorthogonal decomposition \eqref{eqn:semicond}. Thus it is convenient to reformulate here and keep it in mind for the rest of the paper the setup we are working in:

\begin{Set}\label{setup}
Let $X$ be an Enriques surface with a semiorthogonal decomposition
\[
\Db(X)=\langle\Ku(X,\cL),\cL\rangle,
\]
where $\cL=\langle\cL_1,\dots,\cL_c\rangle$ is an admissible subcategory and $\cL_1,\dots,\cL_c$ are orthogonal admissible subcategories and
\[
\cL_i=\langle L^i_1,\dots,L^i_{n_i}\rangle
\]
where
\begin{itemize}
\item[{\rm (a)}] $L_j^i$ is a line bundle such that $L^i_j=L^i_1\otimes\OO_X(R^i_1+\dots+R^i_{j-1})$ where $R^i_1,\dots,R^i_{j-1}$ is a chain of $(-2)$-curves of $A_{j-1}$ type; 
\item[{\rm (b)}] $\{L^i_1,\dots,L^i_{n_i}\}$ is an exceptional collection; and
\item[{\rm (c)}] If there is $j\in\{1,\dots,c\}$ such that $n_j\neq 1$, then there is $d\in\{1,\dots,c\}$ such that $n_j\geq 2$ for all $1\leq j\leq d$ while $n_j=1$ for all $d<j\leq c$.
\end{itemize}
\end{Set}

\begin{Rem}\label{rmk:noimp}
Since the components $\cL_i$'s are orthogonal, the special choice in (c) above for the ordering of these components does not cause loss of generality.
\end{Rem}

The goal of the rest of this section is to prove the following result which is the key for our Refined Derived Torelli Theorem and answers the open question in \cite[Remark 4.11]{LNSZ}.

\begin{Thm}\label{thm:sphericals}
In Setup \ref{setup}, if $F$ is an object in $\Ku(X,\cL)$, then
\begin{enumerate}
    \item[{\rm (1)}] $F$ is $3$-spherical if and only $F\cong S_j[k]$ for some $d<j\leq c$ and $k\in \Z$; \item[{\rm (2)}] $F$ is $3$-pseudoprojective if and only if $F\cong S_j[k]$ for some $1\leq j\leq d$ and $k\in \Z$.
\end{enumerate}
Furthermore, all these $3$-spherical and $3$-pseudoprojective objects are not isomorphic.
\end{Thm}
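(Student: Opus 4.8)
The plan is to read off the two ``if'' implications and the final non-isomorphism clause directly from Lemma~\ref{lem:compute2}: it already gives that $S_j$ is $3$-spherical when $n_j=1$ and $3$-pseudoprojective when $n_j\geq 2$, while $\RHom(S_i,S_j)=0$ for $i\neq j$, together with the distinct shapes of the self-$\Ext$ algebras in Definition~\ref{def:sphpseudosph}, separates all of these objects and their shifts. Hence the whole weight of the theorem rests on the converse: that every $3$-spherical or $3$-pseudoprojective $F\in\Ku(X,\cL)$ is isomorphic to some $S_j[k]$.

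For the converse I would first extract a structural triangle for such an $F$, viewed inside $\Db(X)$. Applying the defining triangle \eqref{eqn:defofleftmut} of $\mathsf{L}_{\Ku(X,\cL)}$ to $\mathsf{S}_X F$ and using $\zeta^!_\cK\mathsf{S}_X F\cong\mathsf{S}_{\cK}F\cong F[3]$ (Remark~\ref{rmk:Serre} and the Serre-functor condition) produces $F[3]\to\mathsf{S}_X F\to\mathsf{L}_{\Ku(X,\cL)}(\mathsf{S}_X F)$. The crucial point is that the last term lies in the right orthogonal $\Ku(X,\cL)^\perp$, which by Lemma~\ref{lem:distriangle} equals $\mathsf{L}_{\Ku(X,\cL)}(\cL)=\mathsf{S}_X\mathsf{S}_\cL^{-1}(\cL)=\mathsf{S}_X(\cL)$; so it is $\mathsf{S}_X(B)$ for a unique $B\in\cL$, and applying $\mathsf{S}_X^{-1}$ and rotating yields a distinguished triangle $F\to B\to\mathsf{S}_X F\to F[1]$ with $B\in\cL$. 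This $B$ records all morphisms from $F$ to line bundles: since $F\in\cL^\perp$, the triangle and Serre duality give $\RHom(L^i_k,B)\cong\RHom(F,L^i_k)^\vee$ for every $i,k$, and $B\neq 0$ (otherwise $F\cong F[-2]$).

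Next I would apply $\zeta^!_\cK$ to this triangle. The connecting map lies in $\Ext^{-2}(F,F)=0$ in both cases, so it splits and one obtains $F\oplus F[3]\cong\zeta^!_\cK(B)$. Now $\zeta^!_\cK$ carries each block $\cL_i$ into the subcategory $\langle S_i\rangle$ (because $\zeta^!_\cK(L^i_k)=S_i$ for all $k$ by Remark~\ref{rmk:Sifirst}(ii), and $\zeta^!_\cK$ annihilates the $Q^i_{k,l}$, which lie in $\mathsf{S}_X(\cL)$), and the $\langle S_i\rangle$ are mutually orthogonal by the last clause of Lemma~\ref{lem:compute2}. As $F$ is indecomposable ($\Hom(F,F)=\K$), it must sit inside a single $\langle S_j\rangle$, and correspondingly $\RHom(F,L^i_k)=0$ for $i\neq j$ forces $B$ to be concentrated in the block $\cL_j$. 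When $n_j=1$ this already closes the argument: $\cL_j\cong\Db(\mathrm{pt})$, so $B$ is a direct sum of shifts of $L^j_1$, whence $\zeta^!_\cK(B)$ is a direct sum of shifts of $S_j$ and the indecomposable summand $F$ is forced to be some $S_j[k]$. In particular such an $F$ is automatically $3$-spherical, which simultaneously prevents any spherical object from living in a block with $n_j\geq 2$.

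The remaining case $n_j\geq 2$ is where I expect the main obstacle to lie. Here $\cL_j$ is no longer semisimple, the line bundles $L^j_1,\dots,L^j_{n_j}$ are linked along the $A_{n_j-1}$-chain of $(-2)$-curves, and $\zeta^!_\cK(B)$ need not be a direct sum of shifts of $S_j$, so the clean summand argument breaks down. The strategy would be to compute $\RHom(F,L^j_k)$ explicitly from the pseudoprojective $\Ext$-algebra of $F$, Serre duality, and the cokernel objects $Q^j_{k,l}$ of Lemma~\ref{lem:compute}, thereby pinning down $B\in\cL_j$ and showing $\zeta^!_\cK(B)\cong S_j[k]\oplus S_j[k+3]$, which identifies $F\cong S_j[k]$. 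The difficulty is that nothing can be read off numerically—$[F]=[S_j]=0$ in $K_{\mathrm{num}}$, so $\chi(F,-)\equiv 0$—and no Bridgeland stability is available, so every step must be carried out at the level of graded morphism spaces while tracking the interaction of $F$ with the entire non-orthogonal chain inside the block. This bookkeeping, for which the computations of Lemma~\ref{lem:compute} are precisely designed, is the delicate heart of the proof and the essential new input beyond \cite{LNSZ}.
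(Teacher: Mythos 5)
Your handling of the ``if'' directions, the non-isomorphism clause, and the structural triangle is sound and matches the paper: your $B$ is exactly the object $G:=\mathsf{S}_X(\zeta^!_{\mathsf{S}_X(\cL)}(F))$ appearing in the paper's triangle $F\to G\to\mathsf{S}_X(F)\to F[1]$, and your splitting $\zeta^!_\cK(G)\cong F\oplus F[3]$ together with the orthogonality of the $S_i$'s gives a clean (and slightly different, Krull--Schmidt-flavoured) route to the paper's Step~1 (reduction to a single block $\cL_j$) and Step~2 (the case $n_j=1$, where the paper instead quotes \cite[Proposition 4.10]{LNSZ}). Up to that point the proposal is essentially correct.

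However, the case $n_j\geq 2$ is a genuine gap, and it is precisely what the paper calls the technical core of the result. You propose to ``compute $\RHom(F,L^j_k)$ explicitly \dots thereby pinning down $B\in\cL_j$ and showing $\zeta^!_\cK(B)\cong S_j[k]\oplus S_j[k+3]$,'' but this strategy cannot be executed as stated: the pseudoprojectivity hypothesis together with Serre duality only controls the extremal degrees of $\RHom(G,L^j_k)$. The paper's Lemma~\ref{lem:homsofG}(2) pins down $\Hom(G,L^j_k[t])\cong\Hom(G,L^j_k[t+3])\cong\K$ and the vanishing outside $[t,t+3]$, but leaves $\Hom(G,L^j_k[t+1])$ and $\Hom(G,L^j_k[t+2])$ undetermined, so $G$ itself is \emph{not} pinned down and one cannot identify $\zeta^!_\cK(G)$ by inspection. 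What the paper actually does is different: it forms $C:=\Cone(F\xrightarrow{\psi}L^j_1)$ for the unique nonzero $\psi$, checks $\Hom(C,C)\cong\K$, and then proves $C\cong\mathsf{S}_X(\mathsf{S}_\cL^{-1}(L^j_1))$ (Lemma~\ref{lem:diamond}) by explicitly constructing morphisms $g\colon\mathsf{S}_X(\mathsf{S}_\cL^{-1}(L^j_1))\to C$ and $f\colon C\to\mathsf{S}_X(\mathsf{S}_\cL^{-1}(L^j_1))$ and showing $f\circ g\neq 0$; this in turn hinges on the non-vanishing of the composition $G\to G[3]\to L^j_1[3]$ established in Lemma~\ref{lem:homsofG}(3), which itself requires the inductive filtration of $G$ by the $L^j_k$'s along the $A_{n_j-1}$-chain. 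Only then does $F[1]\cong\Cone(L^j_1\to C)\cong S_j[1]$ follow. None of this argument is present in your proposal, so the hard implication of part~(2) remains unproved.
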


Before moving to the proof of this result, we want to discuss an easy application that shows that the Kuznetsov component of an Enriques surface is not, in general, intrinsic to the surface, up to equivalence.

\begin{Cor}\label{cor:nonintri}
If $X$ is a generic nodal Enriques surface, then there exist two semiorthogonal decompositions
\[
\Db(X)=\langle\Ku(X,\cL_i),\cL_i\rangle,
\]
for $i=1,2$, as in Setup \ref{setup} and such that $\cL_1$ and $\cL_2$ consist of $10$ exceptional line bundles but  $\Ku(X,\cL_1)\not\cong\Ku(X,\cL_2)$.
\end{Cor}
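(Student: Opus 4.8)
The plan is to produce the two semiorthogonal decompositions from the two distinct Fano polarizations available on a generic nodal Enriques surface, and then to distinguish the resulting Kuznetsov components by the presence or absence of $3$-pseudoprojective objects, using Theorem \ref{thm:sphericals} as the sole nontrivial input.

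First I would take $\cL_1$ to be the admissible subcategory generated by the $10$ orthogonal line bundles arising from the \emph{ample} Fano polarization in Example \ref{ex:twoFano}: this gives a semiorthogonal decomposition as in Setup \ref{setup} with $c=10$ and type $\{1,\dots,1\}$, so that $d=0$. I would take $\cL_2$ to be the admissible subcategory generated by the $10$ line bundles arising from the \emph{nef but non-ample} Fano polarization of the same surface $X$, also described in Example \ref{ex:twoFano}: this gives a semiorthogonal decomposition as in Setup \ref{setup} with $c=9$ and type $\{1,\dots,1,2\}$, and after the harmless reordering of Remark \ref{rmk:noimp} it has $d=1$. Both $\cL_1$ and $\cL_2$ are generated by exactly $10$ exceptional line bundles, as required.

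Next I would feed each decomposition into Theorem \ref{thm:sphericals}. For $\cL_1$, since $d=0$, part (2) of the theorem shows that $\Ku(X,\cL_1)$ contains \emph{no} $3$-pseudoprojective object (while part (1) yields ten shift-orbits of $3$-spherical objects $S_1,\dots,S_{10}$). For $\cL_2$, since $d=1$, part (2) shows that $\Ku(X,\cL_2)$ contains a $3$-pseudoprojective object, namely $S_1$ together with its shifts. The key point is therefore the qualitative dichotomy: $\Ku(X,\cL_2)$ has a $3$-pseudoprojective object whereas $\Ku(X,\cL_1)$ has none. Finally I would observe that this property is invariant under exact equivalence: any exact equivalence $\Phi\colon\Ku(X,\cL_1)\xrightarrow{\sim}\Ku(X,\cL_2)$ commutes with the Serre functors (these exist by Remark \ref{rmk:Serre}, and Serre functors are unique up to canonical isomorphism, hence preserved by equivalences) and identifies all graded $\RHom$ spaces. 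Thus $\Phi$ would carry a $3$-pseudoprojective object of $\Ku(X,\cL_2)$ back to a $3$-pseudoprojective object of $\Ku(X,\cL_1)$, which does not exist. Hence $\Ku(X,\cL_1)\not\cong\Ku(X,\cL_2)$.

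I do not expect a genuine obstacle here, as the hard work is entirely absorbed into Theorem \ref{thm:sphericals} and into the existence of the two Fano polarizations recorded in Example \ref{ex:twoFano}. The only step requiring minor care is the compatibility of an arbitrary exact equivalence with the Serre functors; but this is the standard fact that Serre functors are unique up to canonical isomorphism and therefore commute with every exact equivalence, which is exactly what makes ``being $3$-pseudoprojective'' a categorical invariant.
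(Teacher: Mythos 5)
Your proposal is correct and follows essentially the same route as the paper: both use the two Fano polarizations of Example \ref{ex:twoFano} to produce decompositions of types $\{1,\dots,1\}$ and $\{1,\dots,1,2\}$, and then invoke Theorem \ref{thm:sphericals} to see that the two Kuznetsov components contain different numbers of $3$-pseudoprojective objects, which is incompatible with an exact equivalence. Your added remark that equivalences commute with Serre functors (so that $3$-pseudoprojectivity is a categorical invariant) just makes explicit what the paper leaves implicit.
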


\begin{proof}
Consider the two distinct semiorthogonal decompositions on $\Db(X)$, for $X$ a generic nodal Enriques surface, described in Example \ref{ex:twoFano}. Assume that there is an exact equivalence $\Ku(X,\cL_1)\cong\Ku(X,\cL_2)$. Then the two Kuznetsov components would contain, up to shifts and isomorphisms, the same number of $3$-spherical and $3$-pseudoprojective objects. But this contradicts Theorem \ref{thm:sphericals}.
\end{proof}

\begin{Rem}\label{rmk:JH}
If one could prove that, given a generic nodal Enriques surface, the Kuznetsov components of the two semiorthogonal decompositions in Example \ref{ex:twoFano} have no non-trivial semiorthogonal decompositions, then Corollary \ref{cor:nonintri} would yield another counterexample to the \emph{Jordan\textendash H\"older property} of semiorthogonal decompositions. Roughly, such a property predicts that if $X$ is a smooth projective variety then the semiorthogonal decompositions of $\Db(X)$ are essentially unique, up to reordering of the components and equivalence. It is worth recalling that we already know counterexamples to such a property \cite{BBS,KuzJH}.
\end{Rem}

\subsection{Proof of Theorem \ref{thm:sphericals}}\label{subsect:proofsph}

It is clear that the `if' part in (1) and (2) and the last part of the statement in Theorem \ref{thm:sphericals} are the content of Lemma \ref{lem:compute2}. Thus it remains to prove the hard implication in (1) and (2): if $F$ is either a $3$-spherical or $3$-pseudoprojective object in $\Ku(X,\cL)$, then, up to shift, it is isomorphic to one of the $S_i$'s. The proof is split in several steps.

\subsubsection*{Step 1: reduction to one component}

We first prove that we can simplify our computation and reduce to the case $\cL=\cL_i$, for some $i=1,\dots,c$. For now, we can indifferently assume that $F$ is either a $3$-spherical or a $3$-pseudoprojective object in $\Ku(X,\cL)$.

First of all, note that the semiorthogonal decomposition in Setup \ref{setup} can be conveniently rewritten as
\[
\Db(X)=\langle \mathsf{S}_X(\cL),\Ku(X,\cL)\rangle.
\]
By \eqref{eqn:defofleftmut} and Lemma \ref{lem:distriangle}, we have a distinguished triangle
\begin{equation}\label{eq:Fseq}
    \zeta^!_{\mathsf{S}_X(\cL)}(F)\rightarrow F \rightarrow \mathsf{S}_X(\mathsf{S}^{-1}_{\Ku(X,\cL)}(F)).
\end{equation}
If $\zeta_{\mathsf{S}_X(\cL)}\colon\mathsf{S}_X(\cL)\hookrightarrow\Db(X)$ denotes the embedding of the corresponding admissible subcategory, then we set
\[
G:=\mathsf{S}_X(\zeta^!_{\mathsf{S}_X(\cL)}(F))\in\cL.
\]
We now want to prove some relevant properties of $G$.

Note that, by assumption, $\mathsf{S}_{\Ku(X,\cL)}(F)\cong F[3]$. Thus, by \cite[Lemma 2.6]{KuzCYcat}, we have the isomorphisms
\begin{align}\label{eqn:assum1}
    \mathsf{S}_{\cL}( G) &\cong \zeta^!_{\cL}(\mathsf S_X(G))\\\nonumber
    &\cong\zeta^!_{\cL}(\Cone(\mathsf{S}^2_X(F)\to\mathsf{S}^3_X(\mathsf{S}^{-1}_\cK( F)))[-1] )\\\nonumber
    & \cong \zeta^!_{\cL}(\Cone(F[3]\to\mathsf{S}_X(F))  )\\\nonumber
    & \cong  \zeta^!_{\cL}( \mathsf{S}_X(F))\\\nonumber
    & \cong \mathsf{S}_X(\zeta^!_{\mathsf{S}_X(\cL)}(F))\\\nonumber
    &\cong G,
\end{align}
where for the second one we use \eqref{eq:Fseq}. The third isomorphism follows from the fact that $\mathsf{S}_X^2(F)\cong F[4]$ and $\mathsf{S}_{\Ku(X,\cL)}(F)\cong F[3]$. The fourth is a consequence of $\zeta^!_{\cL}(F)=0$ while the penultimate is a simple computation. Here $\zeta_\cL\colon\cL\hookrightarrow\Db(X)$ is the embedding of the admissible subcategory $\cL$.

Furthermore, if we apply the equivalence $\mathsf{S}_X$ to \eqref{eq:Fseq}, we get the distinguished triangle
\begin{equation}\label{eq250}
    G\rightarrow \mathsf S_X(F)\rightarrow F[1].
\end{equation}
This implies that $[G]=2[F]$ in $\Num(X)$ and thus $\chi(G,G)=4\chi(F,F)=0$ because $F$ is either $3$-spherical or $3$-pseudoprojective. By \eqref{eqn:assum1}, we have $\Hom(G,G[t])\cong\Hom(G,G[-t])$ for every $t\in\Z$ and thus $\Hom(G,G)$ has dimension at least $2$.

If we apply $\RHom(G,-)$ to \eqref{eq250} and we use Serre duality, we get the isomorphism of graded vector spaces $\RHom(G,G)\cong\RHom(G,\mathsf{S}_X(F))$. Furthermore, if we apply $\RHom(-,\mathsf{S}_X(F))$ to \eqref{eq250} and we take into consideration that, by assumption,
\[
\RHom(F,\mathsf S_X (F))\cong\begin{cases}\K\oplus\K[3] & \text{ if } F \text{ is $3$-spherical;}\\  \K\oplus\K[1]\oplus\K[2]\oplus \K[3] & \text{ if } F \text{ is $3$-pseudoprojective,}\end{cases}
\]
then we get the isomorphisms of graded vector spaces
\begin{equation}\label{eqn:assum2}
\RHom(G,G)\cong\begin{cases}\K[-3]\oplus \K^{\oplus 2}\oplus\K[3] & \text{ if } F \text{ is $3$-spherical;}\\\K[\pm 3]\oplus \K[\pm 2]\oplus\K[\pm 1]\oplus \K^{\oplus 2} & \text{ if } F \text{ is $3$-pseudoprojective;}\end{cases}
\end{equation}
Here we used that $\Hom(G,G)$ has dimension at least $2$.

Denote by $\cQ$ the collection of torsion sheaves $Q^i_{j,l}$ for all $1\leq i\leq c$ and $1\leq j<l\leq n_i$, defined in \eqref{eqn:Qij}. Since  both $F$ and $\mathsf S_X( F)$ are in $\cQ^\perp={}^\perp\cQ$ (here we use Remark \ref{rmk:Qs}), by \eqref{eq250} both $G$ and $\mathsf{S}_X(G)$ are orthogonal to $\cQ$ as well. Thus
\begin{equation}\label{eqn:assum3}
\RHom( G,Q)\cong\RHom(\mathsf S_\cL(G),Q)=0,
\end{equation}
for every $Q\in\cQ$.

We now prove the following result.

\begin{Lem}\label{lem:homsofG}
In Setup \ref{setup}, let $E$ be an object in $\cL$ satisfying
\begin{enumerate}
    \item[{\rm (a)}] $\mathsf{S}_\cL(E)\cong E$;
    \item[{\rm (b)}] We have the isomorphisms $\Hom(E,E)\cong\K^{\oplus 2}$; $\Hom(E,E[3])\cong\K$; $\Hom(E,E[1])\cong\Hom(E,E[2])$ and $\Hom(E,E[t])=0$, for $\abs{t}\geq 4$;
    \item[{\rm (c)}] $\Hom(E,Q)=0$, for every $Q\in \cQ$.
\end{enumerate}
Then 
\begin{enumerate}
    \item [\rm{(1)}] The object $E\in\cL_i$ for some $i$;
    \item [\rm{(2)}] There exists $t\in \Z$ such that for all $1\leq j\leq n_i$, $\Hom(E,L^i_j[t])\cong\Hom(E,L^i_j[t+3])\cong\K$ and $\Hom(E,L^i_j[s])=0$ for $s\leq t-1$ and $s\geq t+4$;
    \item[\rm{(3)}] The composition of non-trivial morphisms $E\to E[3]$ and $E[3]\to L^i_1[t+3]$ is non-trivial.
\end{enumerate}
\end{Lem}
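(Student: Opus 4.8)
The plan is to first split $E$ into its components along the orthogonal pieces $\cL_1,\dots,\cL_c$ and then do all the real work inside a single component.

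First, since the $\cL_j$ are pairwise orthogonal, every object of $\cL$ splits as the direct sum of its projections, so $E\cong\bigoplus_{j=1}^c E_j$ with $E_j\in\cL_j$, and $\mathsf{S}_\cL=\bigoplus_j\mathsf{S}_{\cL_j}$; hence hypothesis (a) transfers componentwise to $\mathsf{S}_{\cL_j}(E_j)\cong E_j$. Combining (a) and (b) one computes $\chi(E,E)=0$, and since $\chi(E,E)=\sum_j\chi(E_j,E_j)$ with each $\chi(E_j,E_j)\ge 0$ by Lemma \ref{lem:compute}(1), every $\chi(E_j,E_j)=0$; positive-definiteness of the Euler form on $\cL_j$ (again Lemma \ref{lem:compute}(1)) then gives $[E_j]=0$ in $K_{\mathrm{num}}(\cL_j)$ for all $j$. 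To prove (1), I would observe that for a \emph{nonzero} $E_j$ the Serre symmetry $\dim\Hom(E_j,E_j[s])=\dim\Hom(E_j,E_j[-s])$ coming from $\mathsf{S}_{\cL_j}(E_j)\cong E_j$, together with $\chi(E_j,E_j)=0$, forces $\dim\Hom(E_j,E_j)$ to be even, hence $\ge 2$. Since $\sum_j\dim\Hom(E_j,E_j)=\dim\Hom(E,E)=2$ by (b), exactly one component $E_i$ is nonzero. This yields (1), and from now on $E=E_i\in\cL_i$.

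For part (2), the key simplification is condition (c). Applying $\RHom(E,-)$ to the short exact sequences $L^i_j\hookrightarrow L^i_l\twoheadrightarrow Q^i_{j,l}$ and using $\RHom(E,Q^i_{j,l})=0$ gives $\RHom(E,L^i_1)\cong\cdots\cong\RHom(E,L^i_{n_i})=:W$, which is precisely the uniformity in $j$ asserted in (2). Moreover, feeding the mutation triangles \eqref{eq:rlrlq} into $\RHom(E,-)$ and again killing the $Q$'s (the same telescoping used in the proof of Lemma \ref{lem:compute}) yields $W\cong\RHom(E,\mathsf{S}^{-1}_{\cL_i}(L^i_1))$ through \eqref{eqn:Serreright}. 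It then remains to pin down the shape of $W$. When $n_i=1$ this is immediate: $E\cong W^\vee\otimes L^i_1$ and $\RHom(E,E)\cong W\otimes W^\vee$, so the support and extreme dimensions of $\RHom(E,E)$, which are known from (b), force $W$ to be supported in four consecutive degrees $[t,t+3]$ with one-dimensional extremes. I expect the genuinely hard case to be $n_i\ge 2$ (the pseudoprojective situation), where the filtration of $E$ by the exceptional collection is nontrivial and $\RHom(E,E)$ is no longer a pure tensor. Here I would argue by induction on $n_i$, peeling off the top quotient through the triangle $E'\to E\to W^\vee\otimes L^i_1$ with $E'\in\langle L^i_2,\dots,L^i_{n_i}\rangle$, and controlling the resulting spectral sequence so that the extreme degrees of $\RHom(E,E)$ still record $\max\supp W-\min\supp W$ with extreme dimension equal to the product of the extreme dimensions of $W$. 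Matching this against the known $\RHom(E,E)$ gives $\max\supp W-\min\supp W=3$ and one-dimensional extremes, i.e. (2) with $t=\min\supp W$.

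Finally, for (3) I would reduce the nonvanishing of the composite $c:=g\circ\alpha\in\Hom(E,L^i_1[t+3])$, where $\alpha$ generates $\Hom(E,E[3])\cong\K$ and $g$ generates $\Hom(E[3],L^i_1[t+3])\cong\Hom(E,L^i_1[t])\cong\K$, to a trace computation. By Serre duality in $\cL_i$ (using $\mathsf{S}_{\cL_i}(E)\cong E$), $\Hom(E,L^i_1[t+3])$ is dual to $\Hom(L^i_1[t+3],E)$, and pairing $c$ against $\eta$ in the latter gives $\Tr(\eta\circ g\circ\alpha)$; writing $\eta\circ g$ as a multiple of the generator $\beta$ of $\Hom(E,E[-3])$ turns this into $\Tr(\beta\circ\alpha)$, which is nonzero because the Serre pairing $\Hom(E,E[3])\otimes\Hom(E,E[-3])\to\K$ is perfect and both factors are one-dimensional and nonzero. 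A short check that $\eta\mapsto\eta\circ g$ is nonzero (dual to $g\ne 0$) supplies a suitable $\eta$, completing (3). The main obstacle throughout is the pseudoprojective case $n_i\ge 2$ in the previous paragraph: extracting the exact extreme degrees and dimensions of $W$ from $\RHom(E,E)$ across the nontrivial exceptional-collection filtration is where essentially all the difficulty concentrates, and it is exactly the phenomenon that does not arise in the orthogonal (generic) setting of \cite{LNSZ}.
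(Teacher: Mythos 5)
Your parts (1) and (2) track the paper's own argument closely: the componentwise splitting $E=\bigoplus_j E_j$, the evenness of $\dim\Hom(E_j,E_j)$ forced by $\chi(E_j,E_j)=0$ and $\mathsf{S}_{\cL_j}(E_j)\cong E_j$, the identification $\RHom(E,L^i_1)\cong\cdots\cong\RHom(E,L^i_{n_i})$ via condition (c), and the peeling-off filtration $E_{j+1}\to E_j\to L_j\otimes\RHom(E_j,L_j)^\vee$ with the extreme-degree bookkeeping are all exactly what the paper does (the paper proves only the inequality $\dim\Hom(E,E_j[\pm m])\geq a_0a_m$ by descending induction, which together with the support bound and hypothesis (b) suffices; your claimed equality is more than is needed and would require more care, but the outline is sound).

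Part (3), however, has a genuine gap. Your trace argument correctly reduces the nonvanishing of $g\circ\alpha$ to the nonvanishing of $\eta\circ g\in\Hom(E[3],E)$, but the parenthetical justification that ``$\eta\mapsto\eta\circ g$ is nonzero (dual to $g\neq 0$)'' is false as stated. Under Serre duality the map $-\circ g\colon\Hom(L^i_1[3],E)\to\Hom(E[3],E)$ is dual to \emph{post}-composition with $\mathsf{S}_\cL(g)$ on $\Hom(E,E[3])$, i.e.\ to the nonvanishing of the composition $E\to E[3]\to\mathsf{S}_\cL(L^i_1)[3]$ --- a statement of exactly the same type and difficulty as (3) itself, not a formal consequence of $g\neq 0$. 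In general a composition of nonzero morphisms between one-dimensional Hom-spaces can vanish (precomposition with a nonzero map can kill a nonzero Hom-space), so Serre duality alone cannot settle this; some input from the structure of $\cL$ is unavoidable. The paper supplies that input by applying $\RHom(E,-)$ to the first filtration triangle $E_2\to E\to L^i_1\otimes\RHom(E,L^i_1)^\vee$: since $E_2$ is an iterated extension of $L^i_a[k]$ with $a\geq 2$ and $k\in\{0,\dots,3\}$, and $\RHom(E,L^i_a)$ is concentrated in degrees $0,\dots,3$ by part (2), one gets $\Hom(E,E_2[4])=0$, hence the map $\Hom(E,E[3])\to\Hom(E,L^i_1[3])$ induced by $g$ is surjective between one-dimensional spaces, which is precisely (3). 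You would need to replace your ``short check'' by an argument of this kind.
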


\begin{proof} 
Let us first prove (1). Since the $\cL_i$'s are orthogonal, we may write $E=F_1\oplus \dots\oplus F_c$, for $F_i$ in $\cL_i$. For the same reason,
\[
\sum \chi(F_i,F_i)=\chi(E,E)=0.
\]
Hence, by Lemma \ref{lem:compute} (1), $\chi(F_i,F_i)=0$ for every $1\leq i\leq c$.

Since $\mathsf{S}_\cL (E)\cong E$ and the $\cL_i$'s are orthogonal, we must have $\mathsf{S}_{\cL_i}(F_i)\cong F_i$, for every $i$. Therefore, $\Hom(F_i,F_i[t])\cong\Hom(F_i,F_i[-t])$, for every $t\in\Z$, and since $\chi(F_i,F_i)=0$, we must have that $\Hom(F_i,F_i)$ is even-dimensional.
As $\Hom(E,E)=\bigoplus_{1\leq i\leq d}\Hom(F_i,F_i)$, assumption (b) implies that $E\cong F_i$, for a unique $i$. Thus $E$ is in $\cL_i$ and we get (1).

In view of what we have just proved, we can simplify the notation and assume $\cL=\cL_i$ and set $n:=n_i$, $L_j:=L^i_j$ and $Q_{j,l}:=Q^i_{j,l}$. First, we can note that, if $n=1$, then $E=L[t]\oplus L[t+3]$, for some $t\in\Z$. Thus (2) and (3) hold true automatically and we may assume $n\geq 2$ from now on.

Let us prove (2) under this assumption. We set $E_1:=E\in\cL$ and write it as an extension
\[
E_2\to E_1\to L_1\otimes\RHom(E_1,L_1)^\vee,
\]
where now $E_2\in\langle L_2,\dots,L_n\rangle$. Inductively, for all $j=1,\dots,n-1$, we then define
\begin{equation}\label{eqn:new1}
E_{j+1}\to E_j\to L_j\otimes\RHom(E_j,L_j)^\vee,
\end{equation}
where $E_k\in\langle L_k,\dots, L_n\rangle$.

If we apply the functor $\RHom(-,L_{j+1})$ to \eqref{eqn:new1} we get the exact sequence
\begin{equation}\label{eqn:new2}
\RHom(L_j,L_{j+1})\otimes\RHom(E_j,L_j)\to\RHom(E_j,L_{j+1})\to\RHom(E_{j+1},L_{j+1}).
\end{equation}
Now consider the distinguished triangle
\begin{equation}\label{eqn:triadop}
L_k\to L_{k+1}\to Q_{k,k+1},
\end{equation}
where the first map is the unique (up to scalar) non-trivial one, and apply $\RHom(E_j,-)$ to it. Note that, by construction, the object $E_j$ is obtained with a finite number of extensions involving $E$ and elements in $\langle L_1,\dots,L_{j-1}\rangle$. On the other hand, $\RHom(L_s,Q_{j,j+1})=0$, when $s\leq j-1$, while, by assumption (c), we have $\RHom(E,Q_{j,j+1})=0$. Therefore we get an isomorphism of graded vector spaces
\[
\Hom(L_j,L_{j+1})\otimes\RHom(E_j,L_j)\stackrel{\sim}{\longrightarrow}\RHom(E_j,L_{j+1}).
\]
Recall that $\Hom(L_j,L_{j+1})\cong\K$ with generator the non-trivial morphism in \eqref{eqn:triadop}.

If we plug this into \eqref{eqn:new2}, we get the isomorphism
\[
\RHom(E_{j+1},L_{j+1})\cong\RHom(E_{j},L_{j})
\]
and, iterating the argument, we get
\[
\RHom(E_{j},L_{j})\cong\RHom(E,L_{1}),
\]
for all $j=1,\dots,n-1$. In particular, \eqref{eqn:new1} gets the form
\begin{equation}\label{eqn:new3}
E_{j+1}\to E_j\to L_j\otimes\RHom(E,L_1)^\vee.
\end{equation}

Again by assumption (c), Serre duality and the fact that $\mathsf{S}_\cL(E)=E$, we get
\begin{equation}\label{eq:2011}
\RHom(E,L_j)\cong\RHom(E,L_1)\quad\text{ and }\quad\RHom(L_j,E)\cong\RHom(L_1,E),
\end{equation} for every $j=1,2,\dots,n-1$. Hence, if we apply the functor $\RHom(E,-)$ to \eqref{eqn:new3} and we use this observation, then we get the distinguished triangle
\begin{equation}\label{eqn:new4}
\RHom(E,E_{j+1})\to\RHom(E,E_j)\to\RHom(E,L_1)\otimes\RHom(E,L_1)^\vee.
\end{equation}

Ut to shifting $E$, we may assume that there is a positive integer $m$ such that $\Hom(E,L_1[t])$ is non-trivial for $t=0,m$, and it is trivial for $t<0$ and $t>m$. Thus the graded vector space $\RHom(E,L_1)\otimes\RHom(E,L_1)^\vee$ has non-trivial components concentrated in degrees $-m,\dots,m$. Moreover, in degrees $-m$ and $m$, the corresponding components have the form
\[
\Hom(E,L_1)\otimes\Hom(E,L_1[m])^\vee\cong\Hom(E,L_1[m])\otimes\Hom(E,L_1)^\vee\cong\K^{\oplus a_0a_m},
\]
where $a_0:=\dim\Hom(E,L_1)$ and $a_m:=\dim\Hom(E,L_1[m])$.

On the one hand, we have
\[
\RHom(E,E_n)\cong\RHom(E,L_1)\otimes\RHom(E,L_1)^\vee.
\]
On the other hand, this information plugged into \eqref{eqn:new4} for $j=n-1$ yields that $\RHom(E,E_{n-1})$ has non-trivial component concentrated in degree $-m,\dots,m$ as well. Furthermore,
\[
\dim\Hom(E,E_{n-1}[\pm m])\geq a_0a_m.
\]
By descending induction on $j=n,\dots,1$ we get
\[
\dim\Hom(E,E_{j}[\pm m])\geq a_0a_m
\]
and thus the same holds true for the case $j=1$.

By assumption (b), we have $\Hom(E,E[3])\cong\K$ while $\Hom(E,E[t])=0$, for $t>3$. Thus $m=3$ and $a_0=a_m=1$ as claimed in (2).

We are finally ready to prove (3). As in the proof of (2), we can assume $t=0$ without loss of generality. If we apply $\RHom(E,-)$ to \eqref{eqn:new3} with $j=1$ (recall that $E_0=E$), we get the long exact sequence
\begin{equation}\label{eqn:long1}
\dots\rightarrow \Hom(E,E[3])\to\Hom(E,L_1[3])\to\Hom(E,E_1[4])\to 0
\end{equation}
where the first map is given by the composition with the unique (up to scalar) non-trivial morphism $E[3]\to L_1[3]$ from (2). Since $E_1$ is the extension of $L_a,\dots,L_a[3]$, for $2\leq a\leq n$, by \eqref{eq:2011} and the fact that $\RHom(E,L_1)$ has non-trivial components concentrated in degrees $0,\dots,m$, we get $\Hom(E,E_1[4])=0$. Hence, since $\Hom(E,E[3])\cong\Hom(E,L_1[3])\cong\K$, the composition with any non-trivial map $E[3]\rightarrow L_1[3]$, which defines the first morphism in \eqref{eqn:long1}, induces an isomorphism $\Hom(E,E[3])\cong\Hom(E,L_1[3])$. This is precisely (3).
\end{proof}

By \eqref{eqn:assum1}, \eqref{eqn:assum2} and \eqref{eqn:assum3}, the assumptions (a)--(c) of Lemma \ref{lem:homsofG} for the object $G$. Thus $G$ is in $\cL_i$, for some $1\leq i\leq c$. The other parts of Lemma \ref{lem:homsofG} will be use later in the proof.

\subsubsection*{Step 2: reduction to the pseudoprojective case ($i\leq d$)}
Let $\cL_i$ be the admissible subcategory identified in Step 1 and set $\hat{\cL}_i:=\langle \cL_1,\dots,\cL_{i-1},\cL_{i+1},\dots,\cL_d\rangle$. If we apply the functor $\mathsf{S}_X^{-1}$ to \eqref{eq:Fseq},  we get the isomorphisms $\mathsf{S}_X^{-1}(G)\cong\zeta^!_{\cL_i}(\mathsf{S}_X^{-1}(F))$ and $\mathsf S^{-1}_{\Ku(X,\cL)} (F)\cong\zeta^*_{\langle\Ku(X,\cL),\hat{\cL}_i\rangle}(\mathsf S^{-1}(F))$. By \cite[Lemma 2.6]{KuzCYcat}, this yields
\begin{align*}
    \mathsf{S}^{-1}_{\langle\Ku(X,\cL),\hat{\cL}_i\rangle}(F)\cong\zeta^*_{\langle\Ku(X,\cL),\hat{\cL}_i\rangle}(\mathsf{S}_X^{-1}(F))\cong\mathsf{S}^{-1}_{\Ku(X,\cL)}( F)\cong F[-3].
\end{align*}
Here $\zeta_{\cL_i}$ and $\zeta_{\langle\Ku(X,\cL),\hat{\cL}_i\rangle}$ are the embeddings of the corresponding admissible subcategories.
Since condition (i) in Definition \ref{def:sphpseudosph} is clearly satisfied, this implies that $F$ is $3$-spherical or $3$-pseudoprojective in the larger category $\langle\Ku(X,\cL),\hat{\cL}_i\rangle$ as well. 

Assume now $i>d$. By Lemma \ref{lem:homsofG} (2) applied to $G=\mathsf{S}_X(\zeta^!_{\mathsf{S}_X(\cL)}(F))\in\cL_i$, we must have $G\cong L_i[t]\oplus L_i[t+3]$ for some $t\in \Z$. In particular, by \eqref{eqn:assum2}, the object $F$ is $3$-spherical. By \cite[Proposition 4.10]{LNSZ}, $F\cong S_i[k]$ for some $k\in \Z$. Therefore, from now on, we may assume $i\leq d$.

\subsubsection*{Step 3: the final isomorphism}
First note that, for $i$ as in Step 2 and since $\mathsf{S}_X(F)\in{}^\perp\!\cL$, by Lemma \ref{lem:homsofG} and \eqref{eq250}, we have the isomorphisms
\begin{equation*}
\Hom(F,L^i_1[t])\cong \Hom(G,L^i_1[t])\cong\begin{cases}\K& \text{if $t=0,3$,}\\ 0 & \text{if $t\leq -1$ and $t\geq 4$,}\end{cases}
\end{equation*}
up to shifting $F$ (and thus $G$).

If $\psi\colon F\to L_1^i$ is the unique, up to scalars, non-trivial morphism, we set
\[
C:=\mathrm{Cone}(F\xrightarrow{\psi} L^i_1).
\]
and get the distinguished triangle
\begin{equation}\label{eqn:disttrian}
F\xrightarrow{\psi} L_1^i\xrightarrow{\varphi}C.
\end{equation}
We first prove the following result.

\begin{Lem}\label{lem:diamond}
In the notation above, $C\cong \mathsf{S}_X(\mathsf{S_{\cL}^{-1}}(L^i_1))$.
\end{Lem}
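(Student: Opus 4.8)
The plan is to recognise the triangle \eqref{eqn:disttrian} as the canonical mutation triangle \eqref{eqn:leftzeta} in disguise. Write $M:=\mathsf{S}_X(\mathsf{S}^{-1}_\cL(L^i_1))$ and let $\alpha\colon S_i\to L^i_1$ be the counit appearing in \eqref{eqn:leftzeta}, so that $\Cone(\alpha)\cong M$. Since $F\in\Ku(X,\cL)=\cL^\perp$, the adjunction between $\zeta_\cK$ and $\zeta^!_\cK$ gives a canonical identification $\Hom(F,L^i_1)\cong\Hom(F,\zeta^!_\cK(L^i_1))=\Hom(F,S_i)$ under which $\psi$ corresponds to a morphism $\theta\colon F\to S_i$ with $\psi=\alpha\circ\theta$. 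Applying the octahedral axiom to the composition $F\xrightarrow{\theta}S_i\xrightarrow{\alpha}L^i_1$ and using $\Cone(\alpha)\cong M$ produces a distinguished triangle
\[
\Cone(\theta)\to C\to M.
\]
Equivalently, applying $\zeta^!_\cK$ to \eqref{eqn:disttrian} and using $\zeta^!_\cK(L^i_1)=S_i$ identifies $\zeta^!_\cK(C)\cong\Cone(\theta)$, so that $C$ lies in $\mathsf{S}_X(\cL)$ precisely when $\theta$ is an isomorphism. In either formulation, $C\cong M$ as soon as $\theta$ is an isomorphism (and then \eqref{eqn:disttrian} is nothing but the decomposition of $L^i_1$ for the semiorthogonal decomposition $\langle\mathsf{S}_X(\cL),\Ku(X,\cL)\rangle$, whose $\mathsf{S}_X(\cL)$-component is $M$). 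Thus the whole lemma reduces to proving that the canonical map $\theta\colon F\to S_i$ is an isomorphism.

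To prove this I would first compute $\RHom(F,L^i_1)\cong\RHom(F,S_i)$ in every degree. Applying $\RHom(-,L^i_1)$ to \eqref{eq250} and using Serre duality together with $F\in\cL^\perp$ to kill the middle term, namely $\RHom(\mathsf{S}_X(F),L^i_1)\cong\RHom(L^i_1,F[4])^\vee=0$, yields $\RHom(F,L^i_1)\cong\RHom(G,L^i_1)$. The object $G\in\cL_i$ satisfies the hypotheses of Lemma \ref{lem:homsofG}, and I would upgrade part (2) of that lemma, which already forces the degree-$0$ and degree-$3$ pieces to be $\K$ and kills every degree outside $[0,3]$, to show that the degree-$1$ and degree-$2$ pieces are one-dimensional as well. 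This would give $\RHom(F,L^i_1)\cong\K\oplus\K[-1]\oplus\K[-2]\oplus\K[-3]$, matching $\RHom(S_i,L^i_1)$ as computed in the proof of Lemma \ref{lem:compute2}.

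With the graded dimensions in hand, the remaining point is that $\theta$ induces isomorphisms upon composition. Concretely, I would show that precomposition with $\theta$ gives isomorphisms $\theta^*\colon\RHom(S_i,S_i)\xrightarrow{\sim}\RHom(F,S_i)$ and $\theta^*\colon\RHom(S_i,F)\xrightarrow{\sim}\RHom(F,F)$. Here both source and target are $4$-dimensional, since $F$ and $S_i$ are $3$-pseudoprojective by Lemma \ref{lem:compute2}, the map $\theta^*$ is an isomorphism in degree $0$ (it sends $\mathrm{id}_{S_i}$ to $\theta\neq0$), and the non-triviality input needed in the top degree is exactly Lemma \ref{lem:homsofG}(3): the non-triviality of the composition $G\to G[3]\to L^i_1[3]$ transports through the identification $\RHom(F,-)\cong\RHom(G,-)$ and the pseudoprojective self-extension structure to guarantee that $\theta^*$ does not drop rank. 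Feeding these isomorphisms into the triangle $F\xrightarrow{\theta}S_i\to\Cone(\theta)$ gives $\RHom(\Cone(\theta),S_i)=0$ and $\RHom(\Cone(\theta),F)=0$; chasing the same triangle once more then yields $\RHom(\Cone(\theta),\Cone(\theta))=0$, so $\Cone(\theta)=0$ and $\theta$ is an isomorphism, as desired.

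The main obstacle is the refined Ext computation underlying the last two paragraphs. Lemma \ref{lem:homsofG}(2) controls only the extreme degrees of $\RHom(G,L^i_1)$, so the real work is to squeeze the degree-$1$ and degree-$2$ groups down to a single dimension and then to verify that composition with $\theta$ remains an isomorphism there. This is precisely where parts (2) and (3) of Lemma \ref{lem:homsofG}, set aside earlier ``for later use,'' together with the pseudoprojectivity of $F$, must be combined; by contrast, the reduction to ``$\theta$ is an isomorphism'' and the final cone-vanishing argument are formal mutation and adjunction bookkeeping.
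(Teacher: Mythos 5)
Your reduction is correct and elegant: applying $\zeta^!_{\Ku(X,\cL)}$ to \eqref{eqn:disttrian} (equivalently, the octahedron on $F\xrightarrow{\theta}S_i\xrightarrow{\alpha}L^i_1$) does identify $\Cone(\theta)$ with the $\Ku(X,\cL)$-component of $C$ in the decomposition $\Db(X)=\langle\mathsf{S}_X(\cL),\Ku(X,\cL)\rangle$, so $C\cong\mathsf{S}_X(\mathsf{S}^{-1}_\cL(L^i_1))$ if and only if $\theta$ is an isomorphism. But note what this buys you: ``$\theta$ is an isomorphism'' is precisely the final conclusion of Theorem \ref{thm:sphericals}, so you have traded the lemma for a direct proof of the theorem's hard implication, and the two steps on which your direct proof rests are exactly the ones you flag as unresolved. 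They are genuine gaps, not bookkeeping. First, the claim $\RHom(F,L^i_1)\cong\RHom(G,L^i_1)\cong\K\oplus\K[-1]\oplus\K[-2]\oplus\K[-3]$ does not follow from the method of Lemma \ref{lem:homsofG}: the inductive comparison via \eqref{eqn:new4} pins down only the extreme degrees, because the connecting morphisms in that long exact sequence are forced to vanish for degree reasons only at the top and bottom; in degrees $1$ and $2$ one gets no lower bound, and the numerical constraint $\chi(G,L^i_1)=0$ only gives $a_1=a_2$, which is compatible with $a_1=a_2=0$ (in which case $\RHom(F,S_i)$ would be $2$-dimensional and your $\theta^*$ could not be an isomorphism from the $4$-dimensional $\RHom(S_i,S_i)$). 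Second, even granting the dimension count, the assertion that $\theta^*$ is an isomorphism in degrees $1$ and $2$ requires multiplicative information about $\RHom(S_i,S_i)$ (e.g.\ that it is generated in degree $1$, i.e.\ that $S_i$ is a genuine $\mathbb{P}^3[1]$-object); Remark \ref{rmk:names} states explicitly that no such control is available, and Lemma \ref{lem:homsofG}(3) only supplies non-vanishing of a single composition into degree $3$, not the intermediate degrees.

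The paper's proof is arranged precisely to avoid both issues. It never computes $\RHom(F,L^i_1)$ beyond degrees $0$ and $3$, and never touches the $\Ext$-algebra of $S_i$ or $F$. Instead it establishes $\Hom(C,C)\cong\K$, uses that $M:=\mathsf{S}_X(\mathsf{S}^{-1}_\cL(L^i_1))$ is exceptional, and constructs morphisms $g\colon M\to C$ and $f\colon C\to M$ with $f\circ g\neq 0$; exceptionality of $M$ then makes $f\circ g$ an isomorphism, so $M$ is a direct summand of $C$, and $\Hom(C,C)\cong\K$ forces $C\cong M$. The only non-formal input is the single non-vanishing composition, which is extracted from Lemma \ref{lem:homsofG}(3) through the octahedron \eqref{eqn:bigsquare} and the auxiliary maps $k_1,\dots,k_4$. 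If you want to salvage your route, you would need to supply an independent argument for the middle-degree $\Ext$-groups and for the non-degeneracy of composition with $\theta$ there; as written, the proposal defers exactly the part that carries the mathematical content.
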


\begin{proof}
First, we apply $\RHom(-,L^i_1)$ to \eqref{eqn:disttrian}. By definition of $\psi$, $\Hom(L^i_1,L^i_1)\xrightarrow{\psi}\Hom(F,L^i_1)$ is an isomorphism.  Since $\Hom(F[1],L^i_1)=0$ and $\Hom(L^i_1,L^i_1[1])=0$, we get $\Hom(C,L^i_1)=0$ and $\Hom(C,L^i_1[1])=0$.

If we apply $\RHom(C,-)$ to \eqref{eqn:disttrian}, we get the isomorphisms
$\Hom(C,C)\cong\Hom(C,F[1])\cong\K$. Since $\mathsf{S}_X(\mathsf{S}_{\cL}^{-1}(L^i_1))$ is exceptional, being $L^i_1$ so, the claim follows if we show that there are non-trivial morphisms
\[
\mathsf{S}_X(\mathsf{S}_{\cL}^{-1}(L^i_1))\stackrel{g}{\longrightarrow} C\stackrel{f}{\longrightarrow} \mathsf{S}_X(\mathsf{S}_{\cL}^{-1}(L^i_1))
\]
whose composition is non-trivial.

To proceed in this direction, we first observe that, if we apply $\RHom(-,F[1])$ to \eqref{eqn:disttrian} and we use Serre duality, we get
\begin{align*}\label{eqn:Hom1}
&\Hom(C,F[1])\cong\Hom(F[1],F[1])\cong\K\\
&\Hom(\mathsf{S}_X(F),C)\cong\Hom(C,F[4])^\vee\cong\Hom(F[1],F[4])^\vee\cong\K.
\end{align*}
Indeed, the fact that $F\in\cL^\perp$ yields $\RHom(C,F)\cong\RHom(F[1],F)$.

Let $k_1$ be a non-trivial element in $\Hom(C,F[1])$. Note that since $L^i_1$ is exceptional, up to scalars, the morphism $k_1$  realizes $L^i_1$ as the extension of $F$ and $C$ in \eqref{eqn:disttrian} yielding the distinguished triangle
\begin{equation}\label{eqn:disttrian1}
L_1^i\xrightarrow{\varphi}C\xrightarrow{k_1} F[1]\xrightarrow{\psi[1]}L_1^i[1].
\end{equation}
Let $k_2$ be the non-trivial generator of $\Hom(\mathsf{S}_X(F),C)$.

On the other hand, as $\mathsf{S}_X(F)\in \langle\mathsf{S}_X(\mathsf{S_{\cL}^{-1}}(L^i_1))\rangle^\perp$, by \eqref{eq250} we get 
\begin{align*}\label{eqn:Hom2}
\Hom(\mathsf{S}_X(\mathsf{S}_{\cL}^{-1}(L^i_1)),F[1])&\cong\Hom(\mathsf{S}_X(\mathsf{S_{\cL}^{-1}}(L^i_1)),G[1])\\\nonumber
&\cong\Hom(G[1],\mathsf{S}_{\cL}^{-1}(L^i_1)[4])^\vee\\\nonumber
&\cong\Hom(G,L^i_1[3])^\vee\\\nonumber
&\cong\K,
\end{align*}
where the second isomorphisms is by Serre duality while the penultimate is obtained by applying $\mathsf{S}_\cL$ and using that $\mathsf S_\cL (G)=G$ by \eqref{eqn:assum1}.
Let $k_3$ be a non-trivial generator of the vector space $\Hom(\mathsf{S}_X(\mathsf{S}_{\cL}^{-1}(L^i_1)),F[1])$.

Finally, we consider the isomorphisms
\[
\Hom(\mathsf{S}_X(F),\mathsf{S}_X(\mathsf{S}_{\cL}^{-1}(L^i_1)))\cong\Hom(F,\mathsf{S}_{\cL}^{-1}(L^i_1))\cong\Hom(G,\mathsf{S}_{\cL}^{-1}(L^i_1))\cong\Hom(G,L^i_1)\cong\K,
\]
where the first isomorphism is due to the fact that $\mathsf{S}_X$ is an equivalence. For the second one, we apply $\RHom(-,\mathsf{S}_\cL (L^i_1))$ to \eqref{eq250} and use that $\mathsf{S}_X(F)\in{}^\perp\!\cL$. For the third one, we apply $\mathsf{S}_\cL$ and use \eqref{eqn:assum1}. The last isomorphism is due to Lemma \ref{lem:homsofG} (2). We set $k_4$ to be the non-trivial generator of $\Hom(\mathsf{S}_X(F),\mathsf{S}_X(\mathsf{S}_{\cL}^{-1}(L^i_1)))$.

Let us prove that $k_3\circ k_4$ is a non-trivial morphism. By Lemma \ref{lem:homsofG} (3), the composition of the unique (up to scalars) non-trivial morphisms
\[
G\to G[3]\to L^i_1[3]
\]
is non-trivial. Call it $k$. If we apply $\mathsf{S}^{-1}_\cL$ to the whole composition and since $G\cong\mathsf{S}^{-1}_\cL(G)$, then the composition of non-zero morphisms
\[
G\to G[3]\to\mathsf{S}^{-1}_\cL( L^i_1)[3]
\]
is non-trivial. Denote by $h$ the morphism from $F$ to $G$ in \eqref{eq250}. Since $\mathsf S(F)\in{}^\perp\!\cL$, the composition of $h$ with the non-trivial morphisms $G\to G[3]$ and $G\to\mathsf{S}^{-1}_\cL (L^i_1)[t]$ are  non-trivial, for $t=0,3$. Indeed, it is enough to apply $\RHom(-,G[3])$ and $\RHom(-,\mathsf{S}_\cL(L_1^i)[t])$ to \eqref{eq250}. Combining these two remarks we see that the composition of the non-trivial morphisms
\[
F\to G[3]\to\mathsf{S}_\cL^{-1}(L_1^i)[3]
\]
is non-trivial because it factors as
\[
F\xrightarrow{h}\underbrace{G\to G[3]\to\mathsf{S}_\cL^{-1}(L_1^i)[3]}_{k}.
\]
 
Since $\Hom(F,\mathsf{S}_X(F)[3])\cong\Hom(F,\mathsf{S}_X(F)[4])=0$, the composition of non-trivial morphisms
\[
F\to F[3]\xrightarrow{h[3]} G[3]
\]
is non-zero and it coincides with the non-trivial morphism $F\to G[3]$ above, up to a scalar. Therefore, the composition of non-trivial morphisms
\[
F\to F[3]\xrightarrow{h[3]} G[3]\to\mathsf S^{-1}_\cL (L^i_1)[3]
\]
is non-zero and the same is true for the composition of non-trivial morphisms
\begin{equation}\label{eqcomp}
F\to F[3]\to\mathsf S^{-1}_\cL( L^i_1)[3].
\end{equation}
Set
\[
K:=\Cone\left(F\xrightarrow{\mathsf{S}_X^{-1}(k_4)}\mathsf{S}^{-1}_\cL (L^i_1)\right).
\]
Since the composition in \eqref{eqcomp} is non-zero and $\Hom(F,F[4])=0$, we have $\Hom(F,K[3])=0$. By Serre duality, $\Hom(\mathsf{S}_X(K),F[1])=0$. Finally, if we apply $\RHom(-,F[1])$ to the distinguished triangle:
\[
\mathsf{S}_X(F)\stackrel{k_4}{\longrightarrow}\mathsf{S}_X(\mathsf{S}^{-1}_\cL(L^i_1))\longrightarrow \mathsf{S}_X(K),
\]
we get
\begin{equation}\label{eqn:neq}
0\neq k_3\circ k_4\colon\mathsf{S}_X(F)\to\mathsf{S}_X(\mathsf{S}^{-1}_\cL(L^i_1))\to F[1].
\end{equation}

By Serre duality and Lemma \ref{lem:compute}, $\Hom(\mathsf{S}_X(\mathsf{S}^{-1}_\cL (L^i_1)),L^i_1[t])\cong\Hom(\mathsf{S}_\cL(L^i_1)[t],L^i_1[4])=0$, when $t\leq 1$. Thus, if we apply $\RHom(\mathsf{S}_X(\mathsf S^{-1}_\cL( L^i_1)),-)$ to \eqref{eqn:disttrian1}, we get an isomorphism
\[
k_1\circ -: \Hom(\mathsf{S}_X(\mathsf{S}^{-1}_\cL( L^i_1)),C)\stackrel{\sim}{\longrightarrow}\Hom(\mathsf{S}_X(\mathsf S^{-1}_\cL (L^i_1)),F[1]).
\]
In particular, $ \Hom(\mathsf{S}_X(\mathsf{S}^{-1}_\cL (L^i_1)),C)\cong\K$ and there exists a non-trivial map $g\colon\mathsf{S}_X(\mathsf{S}^{-1}_\cL (L^i_1))\to C$ such that
\begin{equation}\label{eqn:neq1}
k_1\circ g=k_3\neq 0.
\end{equation}

Let us now produce the morphism $f$. Consider the diagram 
\begin{equation}\label{eqn:bigsquare}
	\begin{tikzcd}
		\mathsf{S}_X(F)[-1] \arrow{d} \arrow{r}
		& F \arrow{r} \arrow{d} & G\arrow{d}\arrow{r} & \mathsf{S}_X(F) \arrow{d}{}\\
		0 \arrow{d} \arrow{r}
		& L^i_1 \ar[equal]{r} \arrow{d}{} & L^i_1\arrow{d}\arrow{r}& 0 \arrow{d}{}\\
		\mathsf{S}_X(F) \arrow{r}{k_2}
		& C \arrow{r}  & M \arrow{r} & \mathsf{S}_X(F)[1],
	\end{tikzcd}
\end{equation}
where the first row is (a rotation of) the distinguished triangle \eqref{eq250}, the second column is \eqref{eqn:disttrian} while the third column is a distinguished triangle whose morphism $G\to L_1^i$ is the unique (up to scalars) non-trivial morphism from Lemma \ref{lem:homsofG} (2). Since $\mathsf{S}_X(F)\in {}^\perp\! \cL$, the top squares in the diagram commute and by the octahedron axiom, the bottom row is a distinguished triangle as well.

If we apply $\RHom(-,L^i_1)$ to the triangle in the third column of \eqref{eqn:bigsquare} and we take into account that $\Hom(G,L^i_1)\cong\K$ and $\Hom(G[1],L^i_1)\cong\Hom(L^i_1[-1],L^i_1)=0$ (by Lemma \ref{lem:homsofG} (2)), we get $\Hom(M,L^i_1)\cong\Hom(M[-1],L^i_1)=0$. By Serre duality (both in $\Db(X)$ and in $\cL$),
\[
\Hom(M[t],\mathsf{S}_X(\mathsf{S}^{-1}_\cL (L^i_1)))\cong\Hom(\mathsf{S}^{-1}_\cL (L^i_1),M[t])^\vee\cong\Hom(M[t],L^i_1)=0,
\]
when $t=0$ or $-1$.

Now, if we apply  $\RHom(-,\mathsf{S}_X(\mathsf{S}^{-1}_\cL(L^i_1)))$ to the distinguished triangle in the bottom row of \eqref{eqn:bigsquare} and we use the vanishing above, then we get an isomorphism
\[
-\circ k_2: \Hom(C,\mathsf{S}_X(\mathsf{S}^{-1}_\cL(L^i_1)))\stackrel{\sim}{\longrightarrow} \Hom(\mathsf{S}_X(F),\mathsf{S}_X(\mathsf{S}^{-1}_\cL(L^i_1))).
\]
In particular, $\Hom(C,\mathsf{S}_X(\mathsf{S}^{-1}_\cL(L^i_1)))\cong\K$ and there a non-trivial $f\colon C\to \mathsf{S}_X(\mathsf{S}^{-1}_\cL(L^i_1))$ such that
\begin{equation*}\label{eqn:neq2}
f\circ k_2=k_4 \neq 0.
\end{equation*}

To summarize, all morphisms we introduced so far fit in the following diagram
\[
\xymatrix{
&& \mathsf{S}_X(F)\ar[d]^{k_2}\ar[drr]^{k_4}\\
\mathsf{S}_X(\mathsf{S_{\cL}^{-1}}(L^i_1)) \ar[rr]^-{g}\ar[drr]_-{k_3}&& C \ar[rr]^-{f}\ar[d]^-{k_1}&&\mathsf{S}_X(\mathsf{S_{\cL}^{-1}}(L^i_1))\\
&&F[1],&&
}
\]
where the two triangles are commutative. Note now that the composition $k_3\circ f$ is non-trivial. Indeed, if not, we would have $0=k_3\circ f\circ k_2=k_3\circ k_4$, contradicting \eqref{eqn:neq}. Thus there is $0\neq\lambda\in\K$ such that $\overline{k}_3:=\lambda k_3\neq 0$ sits in the diagram
\[
\xymatrix{
\mathsf{S}_X(\mathsf{S_{\cL}^{-1}}(L^i_1)) \ar[rr]^-{g}\ar[drr]_-{k_3}&& C \ar[rr]^-{f}\ar[d]^-{k_1}&&\mathsf{S}_X(\mathsf{S_{\cL}^{-1}}(L^i_1))\ar[dll]^-{\overline{k}_3}\\
&&F[1],&&
}
\]
where the two triangles are commutative.

Assume now $0=f\circ g$. By the commutativity of the diagram above
\[
0=\overline{k}_3\circ f\circ g= k_1\circ g=k_3
\]
which contradicts \eqref{eqn:neq1}. Thus $f\circ g\neq 0$ as we want.
\end{proof}

In conclusion we get the following sequence of isomorphisms
\[
F[1]\cong\Cone(L^i_1\xrightarrow {\varphi}C)\cong \Cone(L^i_1\to\mathsf{S}_X(\mathsf{S_{\cL}^{-1}}(L^i_1)))\cong S_i[1].
\]
Note that for the first isomorphism follows from \eqref{eqn:disttrian}, the second one from Lemma \ref{lem:diamond} while the last one is a consequence of the definition of $S_i$ and of the isomorphism 
\[
\Hom(L^i_1,\mathsf{S}_X(\mathsf{S_{\cL}^{-1}}(L^i_1)) )\cong\K.
\]
This concludes the proof of Theorem \ref{thm:sphericals}.

\section{Proof of the main result}\label{sect:proof}

In this section, after a quick discussion about Fourier\textendash Mukai functors and the way we can extend them from admissible subcategories to larger subcategories, we prove our main result. As we will see the main theorem follows from the more precise statement in Section \ref{subsect:proofthm}.

\subsection{Fourier\textendash Mukai functors and their extensions}\label{subsect:criterion}
Let us start with a short introduction to the theory of Fourier\textendash Mukai functors. In complete generality, assume that $X_1$ and $X_2$ are smooth projective varieties over $\K$ with admissible subcategories 
\[
\alpha_i:\AA_i\into\Db(X_i),
\]
for $i=1,2$.

An exact functor $\mathsf{F}\colon\AA_1\to\AA_2$ is \emph{of Fourier\textendash Mukai type} if there exists an object $\mathcal{E}\in\Db(X_1\times X_2)$ such that there is an isomorphism of exact functors
\[
\alpha_2\circ \mathsf{F}\cong\Phi_\EE|_{\AA_1}\colon\AA_1\to\Db(X_2).
\]
The exact functor $\Phi_{\EE}$ is defined as
\[
\Phi_\mathcal{E}(-):=p_{2*}(\mathcal{E}\otimes p_1^*(-)),
\]
where $p_i\colon X_1\times X_2\to X_i$ is the $i$th natural projection.

\begin{Rem}\label{rmk:FM}
(i) Suppose we are given a Fourier\textendash Mukai functor  $\Phi_\cE\colon\Db(X_1)\to\Db(X_2)$ such that $\mathsf{F}:=\Phi_\cE|_{\AA_1}\colon\AA_1\to\Db(X_2)$ factors through $\AA_2$. The projection functor onto $\AA_i$ is of Fourier\textendash Mukai type   by \cite[Theorem 7.1]{Kuz11}. Thus, precomposing $\Phi_\cE$ with the projection onto $\AA_1$, yields a Fourier\textendash Mukai functor $\Phi_{\cE'}\colon\Db(X_1)\to\Db(X_2)$ such that $\Phi_{\cE'}|_{\AA_1}=\mathsf{F}$ and $\Phi_{\cE'}({}^\perp\!\AA_1)=0$.

(ii) It should be noted that, when $\mathsf{F}\colon\AA_1\to\AA_2$ is an equivalence, \cite[Conjecture 3.7]{Kuz07} should imply that $\mathsf{F}$ is of Fourier\textendash Mukai type in the above sense. This expectation is motivated by what is known for full functors between the bounded derived categories of smooth projective varieties (see \cite{COS,CS,Ol,Or}).
\end{Rem}

In general, one should not expect to be able to extend an equivalence between admissible subcategories to the whole triangulated categories. Nonetheless, this is possible under some compatibility assumptions as illustrated in the following result which we proved in \cite{LNSZ}.

\begin{Prop}[{\cite[Propositions 2.4 and 2.5]{LNSZ}}]\label{prop:extension}
Let $\alpha_1\colon\cA_1\hookrightarrow\Db(X_1)$ be an admissible embedding and let $E\in \!^\perp\!\cA_1$ with counit of adjunction  $\eta_1\colon\alpha_1\alpha_1^!(E)\to E$ .
Let $\Phi_\cE:\Db(X_1)\rightarrow \Db(X_2)$ be a Fourier\textendash Mukai functor with the property that $\Phi_\cE(^\perp\!\cA_1)\cong 0$.  Suppose further that
\begin{itemize}
\item[{\rm (a)}] $\Phi_\cE|_{\AA_1}$ is an equivalence onto an admissible subcategory $\AA_2$ with embedding $\alpha_2:\AA_2\into\Db(X_2)$, and
\item[{\rm (b)}] there is an exceptional object $F\in{}^\perp\!\AA_2$ and an isomorphism $\rho:\Phi_\cE(\alpha_1\alpha_1^!(E))\isomor\alpha_2\alpha^!_2(F)$.
\end{itemize}
Then there exists a Fourier\textendash Mukai functor $\Phi_{\tilde{\cE}}:\Db(X_1)\rightarrow \Db(X_2)$ satisfying
\begin{itemize}
    \item [\rm{(1)}] $\Phi_{\tilde{\cE}}(^\perp\langle \cA_1,E\rangle)\cong \mathsf 0$;
    \item [\rm{(2)}] $\Phi_{\tilde{\cE}}|_{\cA_1}\cong \Phi|_{\cA_1}$ and $\Phi_{\tilde{\cE}}(E)\cong F$;
    \item[\rm{(3)}] $\Phi_{\tilde{\cE}}|_{\langle \cA_1, E\rangle}$ is an equivalence onto $\langle \cA_2,F\rangle$.
\end{itemize}
\end{Prop}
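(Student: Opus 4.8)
The plan is to realise $\Phi_{\tilde{\cE}}$ as a cone of Fourier--Mukai functors, gluing $\Phi_\cE$ (which already restricts to the desired equivalence on $\cA_1$ and kills ${}^\perp\!\cA_1$) with an auxiliary functor that implants $F$ in place of $E$, the two being glued along the compatibility datum $\rho$. First I would record the ambient semiorthogonal decompositions. Since $E\in{}^\perp\!\cA_1$ is exceptional, $\langle\cA_1,E\rangle$ is admissible and $\Db(X_1)=\langle\cA_1,E,\cC_1\rangle$ with $\cC_1:={}^\perp\langle\cA_1,E\rangle$; likewise $\langle\cA_2,F\rangle$ is admissible with complement $\cC_2$. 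Let $\mathsf{pr}\colon\Db(X_1)\to\langle E\rangle$ be the projection onto the $E$-component, which is of Fourier--Mukai type by \cite[Theorem 7.1]{Kuz11} and satisfies $\mathsf{pr}(E)\cong E$ and $\mathsf{pr}(\cA_1)=\mathsf{pr}(\cC_1)=0$. I then set
\[
\Xi(-):=F\otimes\RHom(E,\mathsf{pr}(-))\colon\Db(X_1)\to\Db(X_2),
\]
a Fourier--Mukai functor with $\Xi(E)\cong F$ and $\Xi(\cA_1)=\Xi(\cC_1)=0$.

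Next I would produce a natural transformation $\tau\colon\Xi[-1]\to\Phi_\cE$. Identifying natural transformations between Fourier--Mukai functors with morphisms of their kernels, $\tau$ is the one determined by $\rho$ together with the counit $\eta_2\colon\alpha_2\alpha_2^!(F)\to F$: under the identification $\Phi_\cE(\alpha_1\alpha_1^!E)\cong\alpha_2\alpha_2^!(F)$ provided by $\rho$, it is designed so that applying the cone to the defining triangle $\alpha_1\alpha_1^!(E)\xrightarrow{\eta_1}E\to\mathsf{L}_{\cA_1}(E)$ of $E$ reproduces the defining triangle $\alpha_2\alpha_2^!(F)\xrightarrow{\eta_2}F\to\mathsf{L}_{\cA_2}(F)$ of $F$. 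I then define
\[
\Phi_{\tilde{\cE}}:=\Cone(\Xi[-1]\xrightarrow{\ \tau\ }\Phi_\cE),
\]
which is again of Fourier--Mukai type, its kernel being the cone of the corresponding morphism of kernels (as in Remark \ref{rmk:FM}). Properties (1) and (2) are then routine evaluations of the defining triangle $\Xi[-1]\to\Phi_\cE\to\Phi_{\tilde{\cE}}$ on objects: on $\cA_1$ and on $\cC_1$ the functor $\Xi$ vanishes, so $\Phi_{\tilde{\cE}}|_{\cA_1}\cong\Phi_\cE|_{\cA_1}$, and since also $\Phi_\cE(\cC_1)=0$ we get $\Phi_{\tilde{\cE}}(\cC_1)=0$, i.e.\ (1); while on $E$ one has $\Phi_\cE(E)=0$, whence $\Phi_{\tilde{\cE}}(E)\cong\Cone(F[-1]\to 0)\cong F$, completing (2).

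The hard part will be (3): that $\Phi_{\tilde{\cE}}|_{\langle\cA_1,E\rangle}$ is an equivalence onto $\langle\cA_2,F\rangle$. I would reduce full faithfulness, using the two semiorthogonal decompositions together with the facts that $\Phi_\cE|_{\cA_1}$ is already fully faithful and that $E$ and $F$ are exceptional, to the matching of the cross terms $\RHom_{X_1}(A',E)\cong\RHom_{X_2}(\Phi_\cE(A'),F)$ for $A'\in\cA_1$, compatibly with composition. This is exactly where the choice of $\tau$, and hence the compatibility $\rho$ between the $\cA$-parts of the counits of $E$ and of $F$, is used. Once full faithfulness is in hand, essential surjectivity is immediate, since the image is a triangulated subcategory containing the generators $\cA_2$ and $F$ of $\langle\cA_2,F\rangle$.

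The main obstacle is precisely this correctness of the gluing in (3). The naive choice $\tau=0$ would give the direct sum $\Phi_\cE\oplus\Xi$, which sends an object of $\langle\cA_1,E\rangle$ extending $E$ by some $A'\in\cA_1$ to the \emph{split} object $\Phi_\cE(A')\oplus F$, and thus fails to preserve extension classes and cannot be fully faithful; the content of the proposition is that $\rho$ supplies a nonzero $\tau$ realising the correct extensions. The remaining technical point, the passage between natural transformations and kernel morphisms needed to keep $\Phi_{\tilde{\cE}}$ of Fourier--Mukai type, is handled as in Remark \ref{rmk:FM}.
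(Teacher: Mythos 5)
The paper does not actually prove this statement: it is imported verbatim from \cite[Propositions 2.4 and 2.5]{LNSZ}, so there is no internal proof to compare against. Your overall strategy --- realise $\Phi_{\tilde{\cE}}$ as a cone, at the level of Fourier--Mukai kernels, of a morphism between $\cE$ and a ``rank one'' kernel implanting $F$ --- is indeed the strategy of that reference. However, your construction has an orientation error that is fatal for (3). Your defining triangle of functors is $\Xi[-1]\xrightarrow{\tau}\Phi_\cE\to\Phi_{\tilde{\cE}}\xrightarrow{\beta}\Xi$, coming from a triangle of kernels, so $\beta$ is an honest natural transformation. Evaluating at $A\in\cA_1$ gives $\Xi(A)=0$, hence $\beta_A=0$; evaluating at $E[k]$ gives $\Phi_\cE(E[k])=0$ (since $E\in{}^\perp\!\cA_1$), hence $\beta_{E[k]}\colon\Phi_{\tilde{\cE}}(E[k])\to\Xi(E[k])\cong F[k]$ is an isomorphism. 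Naturality of $\beta$ applied to any $f\in\Hom(A,E[k])$ then yields $\beta_{E[k]}\circ\Phi_{\tilde{\cE}}(f)=\Xi(f)\circ\beta_A=0$, so $\Phi_{\tilde{\cE}}(f)=0$. Thus \emph{for every choice of} $\tau$ your functor annihilates all cross-morphisms from $\cA_1$ to $E$, and (3) fails whenever $E$ is not orthogonal to $\cA_1$, which is exactly the case of interest in this paper. The triangle must be taken the other way, mirroring the decomposition triangle $G_E\to G\to G_{\cA_1}$ of an object $G\in\langle\cA_1,E\rangle$: one needs $\Xi\to\Phi_{\tilde{\cE}}\to\Phi_\cE\xrightarrow{+1}$, i.e.\ $\Phi_{\tilde{\cE}}=\Cone\bigl(\Phi_\cE[-1]\to\Xi\bigr)$, with the shift on $\Xi$ adjusted so that $\Phi_{\tilde{\cE}}(E)\cong F$ rather than $F[-1]$.

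Even after reversing the cone, the two substantive steps are only gestured at. First, the connecting map must be produced as an actual morphism of kernels in $\Db(X_1\times X_2)$: natural transformations between Fourier--Mukai functors are \emph{not} the same as morphisms of kernels (the comparison map is in general neither full nor faithful), so ``the $\tau$ determined by $\rho$ and $\eta_2$'' is an assertion, not a construction. One has to compute the relevant $\Hom$-space between the two kernels, identify it by adjunction with a $\Hom$-space on $X_2$ in which $\rho$ and the counit of $F$ live, and choose the corresponding element. Second --- and this is where $\rho$ is genuinely used --- one must verify that the resulting functor induces isomorphisms $\Hom(A,E[k])\to\Hom(\Phi_\cE(A),F[k])$ for $A\in\cA_1$, by checking that $\Phi_{\tilde{\cE}}$ carries the counit triangle $\alpha_1\alpha_1^!(E)\xrightarrow{\eta_1}E\to\mathsf{L}_{\cA_1}(E)$ to the corresponding triangle for $F$ compatibly with $\rho$; your reduction of full faithfulness to the cross-terms is the right skeleton, but the commutativity of the relevant squares depends on the specific kernel morphism chosen. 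These two verifications are essentially the entire content of \cite[Propositions 2.4 and 2.5]{LNSZ} and are missing from your argument.
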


\subsection{Proof of Theorem \ref{thm:derived_torreli}}\label{subsect:proofthm}

Let $X_1$ and $X_2$ be smooth Enriques surfaces over $\K$ admitting admissible subcategories $\cL$ and $\cM$ which fall under Setup \ref{setup}.  More precisely, their derived categories $\Db(X_1)$ and $\Db(X_2)$ admit semiorthogonal decompositions
\[
\Db(X_i)=\langle\Ku(X_1,\cL),\cL\rangle\qquad\text{ and }\qquad\Db(X_2)=\langle \Ku(X_2,\cM),\cM\rangle
\]
where  $\cL=\langle\cL_1,\dots,\cL_c\rangle$ and $\cM=\langle\cM_1,\dots,\cM_{c'}\rangle$ and 
\begin{align*}
    \cL_i=\langle L^i_1,\dots,L^i_{n_i}\rangle\qquad\text{ and }\qquad\cM_i=\langle M^i_1,\dots, M^i_{n'_i}\rangle
\end{align*}
are as that in Setup \ref{setup}.

We are now going to prove the following result which is actually a more precise version of Theorem \ref{thm:derived_torreli}.

\begin{Thm}\label{thm:gen}
Under the assumptions above, let $\mathsf{F}\colon \Ku(X_1,\cL)\to\Ku(X_2,\cM)$ be an equivalence which is of Fourier\textendash Mukai type. Then the two semiorthogonal decompositions have the same type and there exists a Fourier\textendash Mukai functor $\Phi_{\tilde{\cE}}\colon\Db(X_1)\to\Db(X_2)$ such that
\begin{itemize}
\item[{\rm (1)}] $\Phi_{\tilde{\cE}}|_{\Ku(X_1,\cL)}\cong\mathsf{F}$;
\item[{\rm (2)}] $\Phi_{\tilde{\cE}}\colon\Db(X_1)\to\Db(X_2)$ is an equivalence and thus an isomorphism $X_1\cong X_2$; and
\item[{\rm (3)}] Up to reordering, $\Phi_{\tilde{\cE}}(L^i_j)\cong M^i_j[t_i]$, for some $t_i\in \Z$, all $i=1,\dots,c$, and $j=1,\dots,n_i$. 
\end{itemize}
\end{Thm}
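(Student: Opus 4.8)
The plan is to upgrade the equivalence $\mathsf{F}$ of Kuznetsov components to an equivalence of the full derived categories by inserting the line bundles one at a time, and then to invoke the Derived Torelli Theorem (Theorem~\ref{thm:dertordercat}). First I would turn $\mathsf{F}$ into an honest Fourier--Mukai functor $\Phi_\cE\colon\Db(X_1)\to\Db(X_2)$ with $\Phi_\cE(\cL)\cong 0$ and $\Phi_\cE|_{\Ku(X_1,\cL)}\cong\mathsf{F}$, by precomposing a kernel of $\mathsf{F}$ with the (Fourier--Mukai) projection onto $\Ku(X_1,\cL)$ as in Remark~\ref{rmk:FM}(i). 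Since $\mathsf{F}$ is an exact equivalence, it carries $3$-spherical objects to $3$-spherical objects and $3$-pseudoprojective objects to $3$-pseudoprojective objects. By the classification in Theorem~\ref{thm:sphericals} these are, up to shift, exactly the objects $S_i$ on the $X_1$-side and their analogues $S'_j$ on the $X_2$-side (built from $\cM$ as the $S_i$ are built from $\cL$), and they are pairwise non-isomorphic (Lemma~\ref{lem:compute2}); hence $\mathsf{F}$ induces a bijection $\sigma$ between the blocks with $\mathsf{F}(S_i)\cong S'_{\sigma(i)}[t_i]$ for suitable $t_i\in\Z$, matching singleton blocks ($n_i=1$) with singleton blocks and non-singleton blocks ($n_i\geq 2$) with non-singleton ones. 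In particular $c=c'$ and $d=d'$.

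Next I would run the extension Proposition~\ref{prop:extension} repeatedly, feeding it this matched data. To add the first line bundle of a block, apply it with $\cA_1=\Ku(X_1,\cL)$ and $E=L^i_1\in{}^\perp\Ku(X_1,\cL)$: here the adjunction object is $\alpha_1\alpha_1^!(L^i_1)=\zeta_\cK(S_i)$ by the very construction of $S_i$ (Remark~\ref{rmk:Sifirst}), and the exceptional object $F=M^{\sigma(i)}_1[t_i]$, together with the isomorphism $\Phi_\cE(\zeta_\cK(S_i))\cong\alpha_2\alpha_2^!(F)$ coming from $\mathsf{F}(S_i)\cong S'_{\sigma(i)}[t_i]$, supplies hypotheses (a)--(b). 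This extends the functor with $L^i_1\mapsto M^{\sigma(i)}_1[t_i]$; since distinct blocks are orthogonal, all first line bundles can be inserted this way. I would then continue inside each block, adding $L^i_{j+1}$ (which lies in the left orthogonal of the current subcategory, being part of the exceptional collection) and mapping it to $M^{\sigma(i)}_{j+1}[t_i]$; the common shift $t_i$ throughout the block is forced by Lemma~\ref{lem:compute}(3), which identifies all the projections $\zeta^!_\cK(L^i_j)$ with the single object $S_i$. This construction yields assertion~(3).

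The delicate point is to match the block sizes, which is what gives ``same type''. The insertion of the block $\cL_i$ is steered entirely by the special object $S_i$, which by the classification corresponds to the single block $\cM_{\sigma(i)}$ on the target (cross-block contributions being excluded by $\RHom(S_i,S_j)=0$ for $i\neq j$, Lemma~\ref{lem:compute2}); thus $\cL_i$ can only be mapped into $\cM_{\sigma(i)}$. At each step the existence of the next source bundle $L^i_{j+1}$ forces, through the shape of the adjunction object, the existence of a next target bundle $M^{\sigma(i)}_{j+1}$, so that $n'_{\sigma(i)}\geq n_i$ for every $i$. Since $\sum_i n_i=10=\sum_j n'_j$, these inequalities must all be equalities, whence $n_i=n'_{\sigma(i)}$ for all $i$: the two decompositions have the same type, and the extension inserts all ten line bundles. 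The final application of Proposition~\ref{prop:extension}(3) then shows that $\Phi_{\tilde\cE}$ is an equivalence of $\langle\Ku(X_1,\cL),\cL\rangle=\Db(X_1)$ onto $\langle\Ku(X_2,\cM),\cM\rangle=\Db(X_2)$, whence $X_1\cong X_2$ by Theorem~\ref{thm:dertordercat}. This gives~(1) and~(2).

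I expect the main obstacle to be the inductive verification of hypothesis~(b) of Proposition~\ref{prop:extension} inside a block: once the ambient subcategory already contains $L^i_1,\dots,L^i_j$, the adjunction object $\alpha_1\alpha_1^!(L^i_{j+1})$ is no longer simply $\zeta_\cK(S_i)$ but an iterated extension also involving the previously inserted bundles, and one must show both that the partially constructed functor carries it to the object $\alpha_2\alpha_2^!(M^{\sigma(i)}_{j+1}[t_i])$ and that such a target is forced to exist---this last requirement being exactly what pins down the block sizes and thus the type. Controlling these adjunction objects, and hence turning the chain structure of the $(-2)$-curves into the needed compatibility, is where Lemma~\ref{lem:compute} and the precise classification of Theorem~\ref{thm:sphericals} do the essential work.
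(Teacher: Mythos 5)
Your proposal follows essentially the same route as the paper: realize $\mathsf{F}$ as a Fourier\textendash Mukai functor killing $\cL$ (Remark \ref{rmk:FM}), use the classification of Theorem \ref{thm:sphericals} to match the blocks via the objects $S_i$, insert the line bundles one at a time through Proposition \ref{prop:extension}, and conclude with Theorem \ref{thm:dertordercat}. The obstacle you flag at the end --- verifying hypothesis (b) of Proposition \ref{prop:extension} once a block has been partially inserted --- is exactly where the paper does its work, and it is closed by observing that $\langle\Ku(X_1,\cL),L^1_1\rangle$ together with $\cL'_1=\langle L^1_2,\dots,L^1_{n_1}\rangle$ is again a decomposition as in Setup \ref{setup} (this is precisely why condition (3) of Proposition \ref{prop:exceptionalcollectionexists} is not imposed there), so Theorem \ref{thm:sphericals} can be reapplied at every stage to produce the required target object $\zeta^!(M^1_{j+1})$ up to shift, which in particular forces $n'_1\geq 2$ and, iterating, the equality of types. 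One small correction: the constancy of the shift across a block does not follow from Lemma \ref{lem:compute}(3), which concerns a single fixed Kuznetsov component, whereas the relevant projection functors change as the ambient subcategory grows; the paper instead pins the shift down by comparing $\RHom(L^1_1,L^1_2)\cong\K\oplus\K[-1]$ with $\RHom(M^1_1,M^1_2[t])\cong\K[t]\oplus\K[t-1]$, forcing $t=0$. With that step supplied, your argument (including the alternative $\sum_i n_i=10$ counting for equality of types, where the paper deduces $n_i=n'_i$ directly at the end of each block) is sound.
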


\begin{proof}
By Remark \ref{rmk:FM} (i), there exists $\cE\in \Db(X_1\times X_2)$ such that $\Phi_\cE|_{\Ku(X_1,\cL)}\cong \mathsf{F}$ and $\Phi_\cE(\cL)\cong \mathsf 0$. The proof proceeds now by induction on $c$, given that when $c=0$, there is nothing to prove.

First observe that, by Theorem \ref{thm:sphericals}, we get the identity $c=c'$ and, up to reordering, 
\begin{equation}\label{eq:zetaL}\Phi_{\cE}(\zeta^!_{\Ku(X_1,\cL)}(L^i_1))\cong \zeta^!_{\Ku(X_2,\cM)}(M^i_1)[t_i]
\end{equation}
for every $1\leq i\leq c$ and some $t_i\in\Z$. Without loss of generality, in the rest of the argument we can assume $t_1=0$. Let us show how to extend $\mathsf{F}$ to an equivalence $\langle\Ku(X_1,\cL),\cL_1\rangle\cong\langle\Ku(X_2,\cM),\cM_1\rangle$. The general argument by induction works literally along the exact same lines.

By Proposition \ref{prop:extension}, there exists $\cE_1\in\Db(X_1\times X_2)$ such that 
\begin{enumerate}
    \item [\rm{(1)}] $\Phi_{\cE_1}|_{\Ku(X_1,\cL)}\cong \mathsf F$ and $\Phi_{\cE_1}(^\perp\langle\Ku(X_1,\cL),L^1_1\rangle)\cong \mathsf 0$;
    \item [\rm{(2)}] $\Phi_{\cE_1}|_{\langle\Ku(X_1,\cL),L^1_1\rangle}\colon\langle\Ku(X_1,\cL),L^1_1\rangle\rightarrow \langle\Ku(X_2,\cM),M^1_1\rangle$ is an equivalence;
    \item [\rm{(3)}] $\Phi_{\cE_1}(L^1_1)\cong M^1_1$.
\end{enumerate}
Actually, it is important to note that the admissible subcategories $\cL':=\langle\cL'_1,\dots,\cL_c\rangle$ and $\cM':=\langle \cM'_1,\dots,\cM_c\rangle$, where $\cL'_1:=\langle L^1_2,\dots, L^1_{n_1}\rangle$ and $\cM'_1:=\langle M^1_2,\dots, M^1_{n'_1}\rangle$, give rise to semiorthogonal decompositions $\Db(X_1)=\langle\Ku(X_1,\cL'),\cL\rangle$ and $\Db(X_2)=\langle\Ku(X_2,\cM'),\cM\rangle$ as in Setup \ref{setup}. Here, $\Ku(X_1,\cL'):=\langle\Ku(X_1,\cL),L^1_1\rangle$ and $\Ku(X_2,\cM'):=\langle\Ku(X_2,\cM),M^1_1\rangle$.

If $n_1=1$, then, by Theorem \ref{thm:sphericals}, $n'_1=1$. Then the first step of the extension is complete. Moreover, $\Phi_{\cE_1}$ satisfies assumptions (a) and (b) of Proposition \ref{prop:extension} for $\cA_1=\langle\Ku(X_1,\cL),L_1^1\rangle$ and $\cA_2=\langle\Ku(X_2,\cM),M_1^1\rangle$. Thus we can proceed further as above.

Assume then $n_1\geq 2$. Since the $\cL_i$'s (and the $\cM_i$'s) are completely orthogonal to each other, we get the isomorphisms $\zeta^!_{\Ku(X_1,\cL)}(L_1^i)\cong\zeta^!_{\Ku(X_1,\cL')}(L_1^i) $ and $\zeta^!_{\Ku(X_2,\cM)}(M_1^i)\cong\zeta^!_{\Ku(X_2,\cM')}(M_1^i)$, for every $i\geq 2$.

By \eqref{eq:zetaL} and  Theorem \ref{thm:sphericals},
\begin{equation}\label{eq:PhizetaL}
    \Phi_{\cE_1}(\zeta^!_{\Ku(X_1,\cL')}(L^1_2))\cong \zeta^!_{\Ku(X_2,\cM')}(M^1_2)[t],
\end{equation}
for some $t\in \Z$. In particular, $n'_1\geq 2$. Moreover, we have the following chain of isomorphisms
\begin{align*}
\RHom(L^1_1,L^1_2)&\cong\RHom(L^1_1,\zeta^!_{\Ku(X_1,\cL')}(L^1_2))\\
    & \cong\RHom(\Phi_{\cE_1}(L^1_1),\Phi_{\cE_1}(\zeta^!_{\Ku(X_1,\cL')}(L^1_2)))\\&\cong\RHom(M^1_1,\zeta^!_{\Ku(X_2,\cM')}(M^1_2)[t])\\
    & \cong \RHom(M^1_1,M^1_2[t])\\
    &\cong\K[t]\oplus \K[t-1],
\end{align*}
where the first and penultimate one follows by adjunction, the second one uses that $\Phi_{\cE_1}|_{\Ku(X_1,\cL')}$ is an equivalence, the third one is by induction and \eqref{eq:PhizetaL} and, finally, the last isomorphism is by the definition of $M_1^1$ and $M^1_2$. Since $\RHom(L^1_1,L^1_2)\cong\K\oplus\K[-1]$, we have $t=0$.

We can then apply Proposition \ref{prop:extension} again in order to extend $\Phi_{\cE_1}|_{\Ku(X_1,\cL')}$ to a Fourier\textendash Mukai functor $\Phi_{\cE_2}$ inducing an equivalence $$\Phi_{\cE_2}:\langle\Ku(X_1,\cL),L^1_1,L^1_2\rangle\stackrel{\sim}{\longrightarrow} \langle\Ku(X_2,\cM),M^1_1,M^1_2\rangle$$ such that
\begin{itemize}
\item $\Phi_{\cE_2}|_{\langle\Ku(X_1,\cL),L^1_1\rangle}\cong\Phi_{\cE_1}|_{\langle\Ku(X_1,\cL),L^1_1\rangle}$,
\item $\Phi_{\cE_2}({^\perp\langle\Ku(X_1,\cL),L^1_1,L^1_2\rangle})\cong \mathsf 0$, and
\item $\Phi_{\cE_2}(L^1_2)\cong M^1_2$.
\end{itemize}
By iterating the procedure, we get a Fourier\textendash Mukai functor $\Phi_{\cE_{n_1}}$ inducing an equivalence
\[
\Phi_{\cE_{n_1}}:\langle\Ku(X_1,\cL),\cL_1\rangle\stackrel{\sim}{\longrightarrow}\langle\Ku(X_2,\cM),M^1_1,M^1_2,\dots,M^1_{n_1}\rangle
\]
such that $\Phi_{\cE_{n_1}}(L^1_j)\cong M^1_j$. By \eqref{eq:zetaL}, \eqref{eq:PhizetaL} and Theorem \ref{thm:sphericals}, the admissible subcategory $\langle M^1_1,\dots,M^1_{n_1}\rangle$ must coincide with $\cM_1$ and thus $n_1=n'_1$.

At each step, the assumptions (a) and (b) of Proposition \ref{prop:extension} are satisfied, so we can proceed further by induction on $c$ and deal with the other components as we mentioned at the beginning of this proof. In conclusion, in a finite number of step, we get an equivalence $\Db(X_1)\cong\Db(X_2)$ and, by Theorem \ref{thm:dertordercat}, an isomorphism $X_1\cong X_2$. This proves (2) in the statement while properties (1) and (3) are automatic by construction.
\end{proof}

Clearly, Theorem \ref{thm:derived_torreli} has a trivial converse. Indeed, assume we are given an isomorphism $f\colon X_1\to X_2$ and a semiorthogonal decomposition for $\Db(X_2)$ as in Setup \ref{setup}. Since the exact functor $f^*\colon\Db(X_2)\to\Db(X_1)$ is an equivalence, we can take on $\Db(X_1)$ the semiorthogonal decomposition as in Setup \ref{setup} which is the image of the given one on $\Db(X_2)$ under $f^*$. The exact functor, $\mathsf{F}:=(f^*)^{-1}|_{\Ku(X_1,\cL_1)}\colon\Ku(X_1,\cL_1)\xrightarrow{\sim}\Ku(X_2,\cL_2)$ is an exact equivalence of Fourier\textendash Mukai type by construction. Of course, the two semiorthogonal decompositions on $X_1$ and $X_2$ have the same type and, in the argument, we can exchange the roles of $X_1$ and $X_2$.


\bigskip

{\small\noindent{\bf Acknowledgements.} We are greatly in debt with Alexander Kuznetsov whose comments on our previous paper \cite{LNSZ} pushed us to refine our classification of special objects in the Kuznetsov component in order to deal with non-generic Enriques surfaces. We are also very grateful to Igor Dolgachev for patiently answering our questions about Enriques surfaces and to Andreas Hochenegger who carefully read a preliminary version of this paper and gave us several insightful suggestions which improved the presentation. Finally, we would like to thank Marcello Bernardara, Daniele Faenzi, Sukhendu Mehrotra, Howard Nuer and Franco Rota who were part of the working group at the \emph{Workshop on  ``Semiorthogonal decompositions, stability conditions and sheaves of categories''} (Toulouse, 2018) where the first part \cite{LNSZ} of this project started.}


\end{document}